\newcommand{\calL}{\mathcal{L}}
\newcommand{\va}{{\bf a}}
\newcommand{\vb}{{\bf b}}
\newcommand{\vc}{{\bf c}}
\newcommand{\vd}{{\bf d}}
\newcommand{\vf}{{\bf f}}
\newcommand{\vu}{{\bf u}}
\newcommand{\vv}{{\bf v}}
\newcommand{\vw}{{\bf w}}
\newcommand{\vz}{{\bf z}}
\newcommand{\vzero}{{\bf 0}}
\newcommand{\calE}{{\mathcal  E}}
\newcommand{\calF}{{\mathcal  F}}
\newcommand{\bsd}{{\boldsymbol{d}}}
\newcommand{\bsx}{{\boldsymbol{x}}}
\newcommand{\bsy}{{\boldsymbol{y}}}
\newcommand{\tth}{{\boldsymbol{\theta}}}
\newcommand{\vg}{{\bf{g}}}
\newcommand{\vsigma}{\boldsymbol{\sigma}}
\newcommand{\R}{\mathbb{R}}
\numberwithin{equation}{section}
\newcommand{\veps}{{\boldsymbol{\varepsilon}}}
\newcommand{\ip}[1]{\langle#1 \rangle}
\newtheorem{theorem}{Theorem}[section]
\title{A decomposition-based robust training of physics-informed neural networks for nearly incompressible linear elasticity}
\author{J. Dick\thanks{School of Mathematics and Statistics, University of New South Wales, Sydney, Australia. \tt{}}, ~S. Ko\thanks{Department of Mathematics, Inha University, Incheon, Republic of Korea. \tt{}},
~Q. T. Le Gia\footnotemark[1], ~K. Mustapha\footnotemark[1]
~and
~S. Park\footnotemark[2]
}
\date{}
\begin{document}
\maketitle
\begin{abstract}
Due to divergence instability, the accuracy of low-order conforming finite element methods for nearly incompressible elasticity equations deteriorates as the Lam\'e coefficient  $\lambda\to\infty$, or equivalently as the Poisson ratio $\nu\to1/2$. This phenomenon, known as locking or non-robustness, remains not fully understood despite extensive investigation. In this work, we illustrate first that an analogous instability arises when applying the popular Physics-Informed Neural Networks (PINNs) to nearly incompressible elasticity problems, leading to significant loss of accuracy and convergence difficulties. Then, to overcome this challenge, we propose a robust decomposition-based PINN framework that reformulates the elasticity equations into balanced subsystems, thereby eliminating the ill-conditioning that causes locking. Our approach simultaneously solves the forward and inverse problems to recover both the decomposed field variables and the associated external conditions. We will also perform a convergence analysis to further enhance the reliability of the proposed approach. Moreover, through various numerical experiments, including constant, variable and parametric Lam\'e coefficients, we illustrate the efficiency of the proposed methodology.
\end{abstract}

\section{Introduction}

Consider the linear elasticity problem on a bounded, polygonal domain \(\Omega \subset \mathbb{R}^d\) for \(d \in \{2,3\}\). We seek the displacement field $\vu(\boldsymbol{x})$ satisfying
\begin{equation}\label{eq:EE}
-\nabla\cdot\vsigma\bigl(\vu(\bsx)\bigr) \,=\, \vf(\bsx),
\quad \bsx \in \Omega,
\end{equation}
subject to the following Dirichlet boundary conditions:
$\vu(\bsx) = \vg(\bsx)$ for $\bsx \in \Gamma := \partial\Omega.$
Here, \(\vsigma(\vu) \in [L^2(\Omega)]^{d\times d}\) is the Cauchy stress tensor, defined by
\begin{equation}\label{eq:sig}
\vsigma(\vu) = \lambda\,\bigl(\nabla\cdot \vu\bigr)\,{ \bf I} + 2\mu\,\veps(\vu),
\quad \bsx \in \Omega,
\end{equation}
where the symmetric strain tensor is given by
\[
\veps(\vu) = \tfrac12\bigl(\nabla \vu + (\nabla \vu)^T\bigr).
\]

In this formulation, \(\lambda\) and \(\mu\) are the Lam\'e coefficients, which can be constants, functions of $\bsx\in \Omega$, or random fields, \(\vf: \Omega \to \mathbb{R}^d\) is the prescribed body force per unit volume, \(\vg: \Gamma \to \mathbb{R}^d\) is the given boundary condition,  and \({\bf I} \in \mathbb{R}^{d\times d}\) is the identity tensor. The differential operators \(\nabla\) (gradient) and \(\nabla\cdot\) (divergence) act with respect to the spatial variable \(\bsx\in\Omega\). 
To guarantee well-posedness of the problem \eqref{eq:EE}-\eqref{eq:sig}, we assume that 
\[0 < \mu_{\min} \le \mu \le \mu_{\max} < \infty~~{\rm and}~~
0 < \lambda_{\min} \le \lambda \le \lambda_{\max} < \infty
\quad \text{on }\Omega.\]
We are interested in the case where the material becomes nearly incompressible,  and so, the Poisson ratio $\nu$ of the elastic material approaches $1/2$, or equivalently, $\lambda_{\min}/\mu_{\max} \gg1$. 

In the special case, when $\lambda$ and $\mu$ are constant, in the nearly incompressible limit of the elasticity equation \eqref{eq:EE}, it is known that standard conforming finite element methods (FEMs) experience a degradation of convergence rates as the first Lam\'e parameter \(\lambda\) becomes large, a phenomenon referred to as volumetric locking. Scott and Vogelius \cite{ScottVogelius1985} demonstrated that on simplicial meshes, conforming polynomial spaces of degree \(p\ge4\) remain free of locking. In striking contrast, Babu\v{s}ka and Suri \cite{BabuskaSuri1992} proved that on quadrilateral meshes, conforming elements of any polynomial degree \(p\ge1\) are inevitably afflicted by locking. The comprehensive survey by Ainsworth and Parker \cite{AinsworthParker2022} analyzes the nuanced numerical pathologies, such as amplified error constants and spurious solution oscillations, that arise in the nearly incompressible regime.

For constant $\lambda$ and $\mu$, volumetric locking in standard conforming FEMs for the nearly incompressible elasticity problem \eqref{eq:EE} has catalyzed the formulation of numerous locking-free schemes. Nonconforming and mixed conforming-nonconforming FEMs (e.g., Arnold et al. \cite{ArnoldAwanouWinther2014}; Brenner and Sung \cite{BrennerSung1992}; Falk \cite{Falk1991}; Gopalakrishnan and Guzmán \cite{GopalakrishnanGuzman2011}; Lee, Lee, and Sheen \cite{LeeLeeSheen2003}; Mao and Chen \cite{MaoChen2008}), weak Galerkin approaches (e.g., Chen and Xie \cite{ChenXie2016}; Huo et al. \cite{HuoWangWangZhang2020}; Liu and Wang \cite{LiuWang2022}), discontinuous Galerkin formulations (e.g., Bramwell et al. \cite{BramwellDemkowiczGopalakrishnanQiu2012}; Di Pietro and Nicaise \cite{DiPietroNicaise2013}; Hansbo and Larson \cite{HansboLarson2002}; Cockburn et al. \cite{SoonCockburnStolarski2009}), and virtual element methods (e.g., Beirão da Veiga et al. \cite{VeigaBrezziMarini2013}; Edoardo et al. \cite{EdoardoStefanoCarloLuca2020}; Zhang et al. \cite{ZhangZhaoYangChen2019}) have successfully averted locking, albeit with greater algorithmic and analytical complexity. Extending many of these techniques to heterogeneous materials, where the Lam\'e coefficients vary with \(\bsx \), introduces additional challenges. In \cite{Mustaphaetal2024}, the authors address this by substituting the Lam\'e parameter \(\lambda\) in the stiffness assembly with an appropriate mesh-dependent surrogate
$\lambda_h$ (with $\lambda_h <\lambda$),  
where \(h\) denotes the maximum finite element mesh size and \(L\) is the diameter of \(\Omega\). This simple yet effective modification preserves the efficiency of the lowest-order conforming FEMs while substantially alleviating volumetric locking.

Deep artificial neural networks (DNNs) have garnered substantial attention as solvers for both forward and inverse partial differential equations (PDEs), owing to their capacity for high-dimensional approximation, the integration of automatic differentiation, and the maturity of open-source machine learning infrastructures. In particular, Physics-Informed Neural Networks (PINNs) incorporate the governing PDE residual directly into the loss functional, yielding a mesh-free paradigm that has experienced exponential growth in applications, surpassing 1,300 publications in 2021 alone \cite{Cuomoetal2022}. Comprehensive reviews have critically examined the theoretical foundations, algorithmic limitations, and emerging use cases of PINNs \cite{BlechschmidtErnst2021,CaiMaoWang2021,KarniadakisKevrekidisLu2021}, while advancing variants such as the variational hp-VPINN framework \cite{KharazmiZhangKarniadakis2021}, the variable-scaling PINN \cite{vs_pinn}, and physics-constrained neural formulations \cite{SunGaoPan2020a,ZhuZabarasKoutsourelakis2019} have demonstrated enhanced approximation fidelity and stability in challenging regimes.

PINNs were introduced in $2017$ by Raissi et al. as a new class of solvers in a two-part article \cite{RaissiPerdikarisKarniadakis2017c,RaissiPerdikarisKarniadakis2017d}. Later on, and for solving nonlinear PDEs,  the same authors created PINNs \cite{RaissiPerdikarisKarniadakis2019}, which can handle both forward problems of estimating the solutions of governing mathematical models and inverse problems. PINNs approximate solutions by training a neural network to minimize a loss function; it includes terms reflecting both initial and boundary conditions in addition to the PDE residual at selected points in the domain (so-called collocation points).  Incorporating a network for the PDE residual that encodes the governing physics equations is a significant novelty with PINNs. For a given input in the integration domain, PINNs produce an estimated solution of a differential equation after training. The basic concept behind PINN training is that it can be thought of as an unsupervised strategy that does not require labelled data, such as results from prior simulations or experiments. The PINN algorithm is essentially a mesh-free technique that finds PDE solutions by converting the problem of directly solving the governing equations into a loss function optimization problem. Owing to this,  PINNs can handle PDEs in domains with complicated geometries or in high dimensions. PINNs allow solutions to be made differentiable using analytical gradients and provide an easy way to solve forward and inverse problems using the same optimization problem (with minimal modifications). These can be considered some of the advantages of PINNs over conventional methods. 

Despite the growing success of PINNs in solving a wide range of PDEs, including two-dimensional elasticity equations with constant Lam\'e coefficients \cite{AlmeidaSilvaJr2023,ChenGu2023,EskinDavydovGurevaMalkhanovSmorkalov2024,GuoHaghighat2022,HaghighatRaissiMoure2021,RoyBoseSundararaghavanArroyave2023}, their application to nearly incompressible elasticity problems remains highly challenging. Similar to classical low-order conforming FEMs, PINNs suffer from locking or divergence instability. This instability manifests as a severe loss of accuracy and robustness, preventing the network from properly learning the displacement and stress fields. Our investigations reveal that the root cause of this issue lies in the intrinsic imbalance of the governing equations, where the volumetric ($\lambda\,\bigl(\nabla\cdot \vu\bigr)\,{ \bf I}$) and deviatoric ($2\mu\,\veps(\vu)$) terms of the stress tensor are scaled by vastly different magnitudes as the ratio of the Lam\'e coefficients $\lambda/\mu$ becomes very large.  For constant $\lambda$ and $\mu,$ this phenomenon is illustrated in Figure \ref{Locking_PINN}, where we train neural networks to solve the linear elasticity equations using PINNs with different values of $\lambda$ and with $\mu=1.$ A more detailed discussion of this phenomenon will be provided in Section \ref{sec_heuristic}. 
\begin{figure}
    \centering
    \begin{subfigure}{0.30\textwidth}
        \centering
        \includegraphics[width=\linewidth]{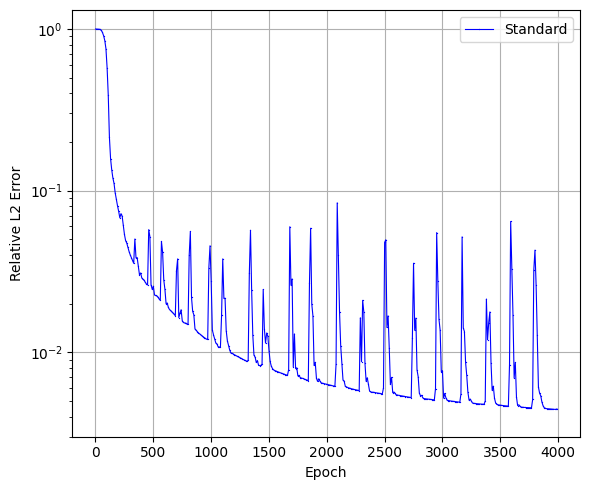}
        \caption{$\lambda=10$}
    \end{subfigure}   
    \begin{subfigure}{0.30\textwidth}
        \centering
        \includegraphics[width=\linewidth]{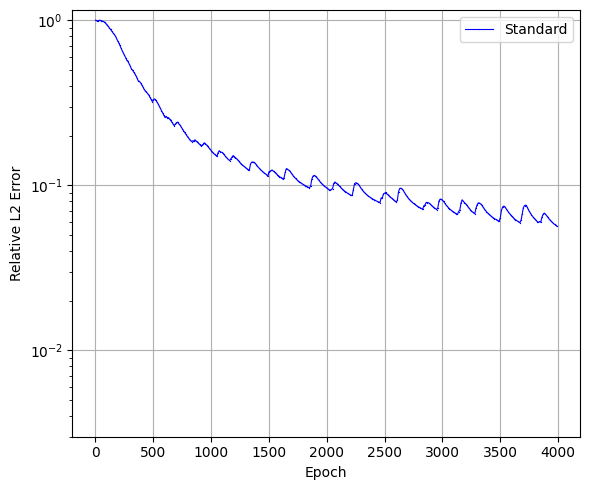}
        \caption{$\lambda=10^2$}
    \end{subfigure}    
    \begin{subfigure}{0.305\textwidth}
        \centering
        \includegraphics[width=\linewidth]{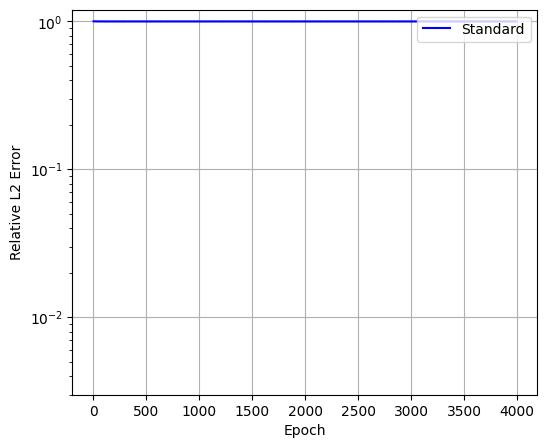} 
        \caption{$\lambda=10^5$}
    \end{subfigure}
    \caption{The locking phenomenon in solving linear elasticity equations using PINNs}
    \label{Locking_PINN}
\end{figure}

In this work, and to overcome the aforementioned challenge in PINNs, we introduce a novel decomposition-based PINN framework that reformulates the nearly incompressible elasticity equations into balanced coupled subsystems. This decomposition effectively mitigates the ill-conditioning that causes locking and allows the network to learn each component with comparable accuracy. The solution can be obtained via PINNs by minimizing the network’s loss function that comprises the residual error of the decomposed governing PDEs and the boundary conditions. The proposed approach provides a stable and efficient alternative to standard PINNs, capable of accurately capturing the behavior of nearly incompressible materials in two and three dimensions, including constant (homogeneous material), variable (inhomogeneous material), and random Lam\'e coefficients.


The remainder of this paper is organized as follows. Section \ref{sec_PINN} provides a brief overview of the PINN frameworks, including its parametric extension, which serves as the foundation of our approach. In Section \ref{sec_heuristic}, we present a heuristic explanation of why the standard PINN fails to solve linear elasticity when $\lambda$ is large. Section \ref{sec_method} then describes the proposed method to resolve this problem in detail, followed by Section \ref{sec_theory}, which establishes a theoretical convergence analysis. Section \ref{sec: numerical section} is devoted to demonstrating the effectiveness of the proposed modified PINNs through a series of numerical experiments on representative problems of the form \eqref{eq:EE}–\eqref{eq:sig} in both two and three dimensions. Moreover, we will conduct the experiment for the case of variable Lam\'e, as well as the case of uncertainties in the Lam\'e coefficients. Finally, some concluding remarks will follow in Section \ref{sec_conclusion}.
 
\section{Physics-informed neural networks}\label{sec_PINN}

\begin{figure}
    \centering
    \begin{subfigure}{0.8\textwidth}
        \centering
        \includegraphics[width=\linewidth]{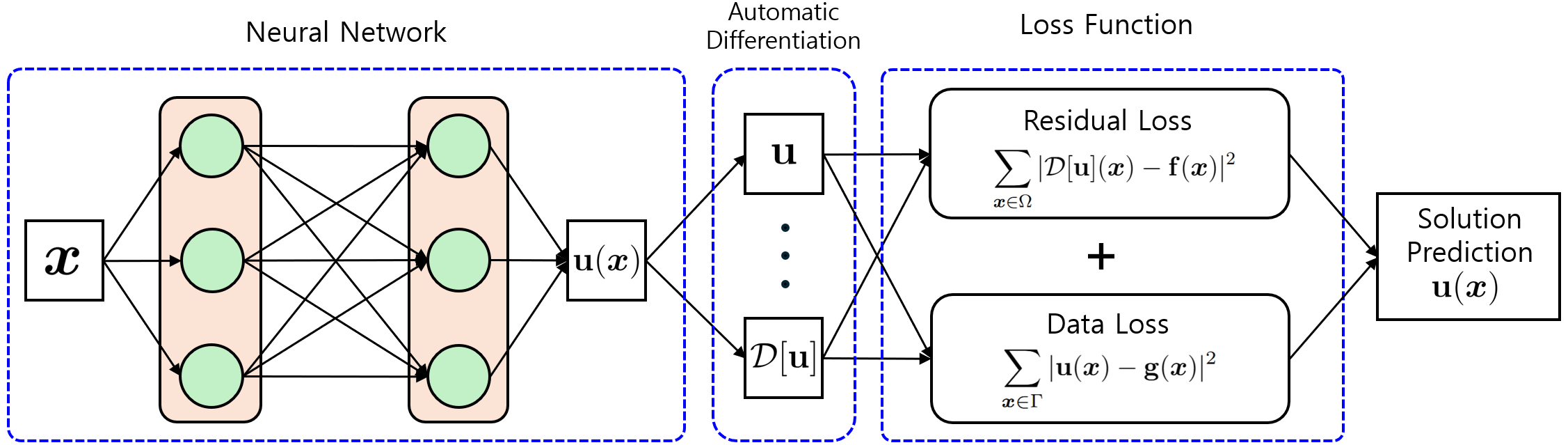}
        \caption{Standard PINN}
        \label{spinn}
    \end{subfigure} 
    \begin{subfigure}{0.8\textwidth}
        \centering
        \includegraphics[width=\linewidth]{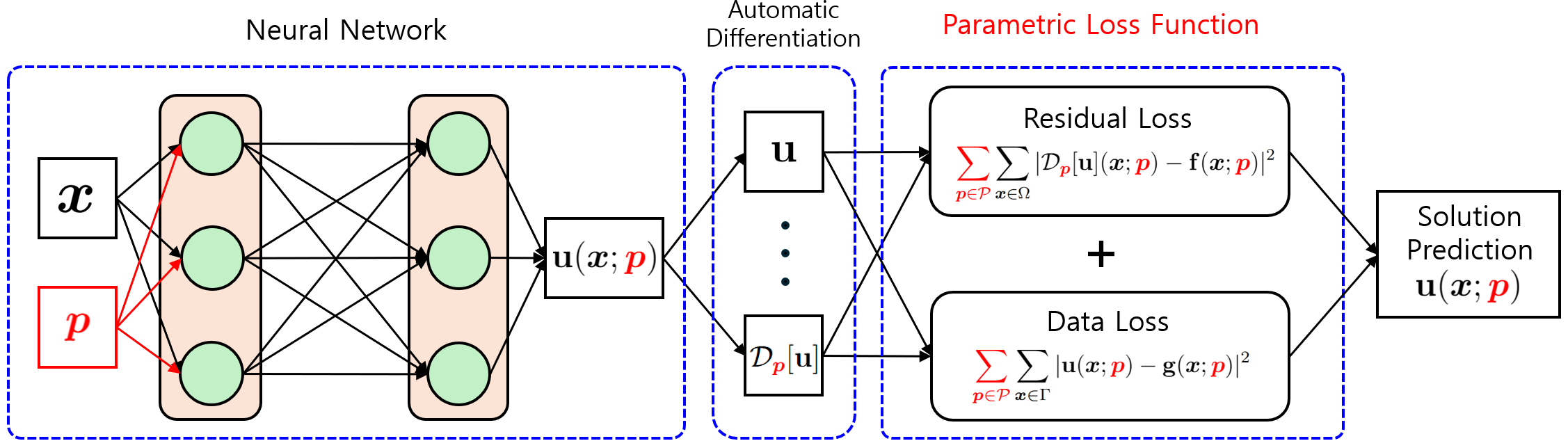} 
        \caption{Parametric PINN}
        \label{ppinn}
    \end{subfigure}
\caption{Comparison of schematic diagrams of PINN and parametric PINN.}
\label{comp_pinn}
\end{figure}

In this section, we provide a concise overview of the PINN framework, along with its extensions for parametric problems, which will be employed later in the paper. We begin by introducing the standard formulation of PINNs, which serves as the baseline approach \cite{RaissiPerdikarisKarniadakis2019}. Then, we introduce the parametric PINN, a novel approach that facilitates rapid inference of solutions under problems with varying parameters, such as variable PDE coefficients, boundary conditions, initial conditions, or external forcing, without necessitating retraining. In this work, the Young's modulus $E$ and Poisson's ratio $\nu$  will be used as input coefficients, which determine the given elasticity equations. A comparative illustration of these methodologies is presented in Figure \ref{comp_pinn}.

\subsection{Vanilla PINN}
First, we shall provide a brief description of PINNs \cite{RaissiPerdikarisKarniadakis2019}. Let us consider the following problem defined on a bounded open domain $\Omega\subset\R^d$ ($d\ge 1$) with a boundary $\Gamma=\partial\Omega$:
\begin{equation}\label{main_eq}
\begin{aligned}
\mathcal{D}[\vu](\bsx)&=\vf(\bsx), &&\boldsymbol{x}\in\Omega,\\
\vu(\boldsymbol{x})&=\vg(\boldsymbol{x}),&&\boldsymbol{x}\in\Gamma,
\end{aligned}
\end{equation}
where $\mathcal{D}$ is a given differential operator and $\vu:\overline{\Omega}\rightarrow\R^m$ (for some $m\in\mathbb{N}$) is the unknown solution, which is possibly multi-dimensional. Indeed, for time-dependent problems, the formulation of PINNs can be simplified by treating the temporal variable $t$ as an additional coordinate of $\bsx$, that is,  $\Omega$ may represent the spatio-temporal domain. The core concept of PINNs is to approximate the solution $\vu(\bsx)$ with a neural network 
$\vu(\bsx;\tth)$, where $\tth$ denotes the trainable parameters which will be described in more detail in Section \ref{sec_theory}, and to learn $\tth$ by minimizing a loss that enforces the governing PDE residual in the domain $\Omega$ together with the boundary conditions on $\partial\Omega$. The model is trained by minimizing a {\textit{physics-informed loss function}}, which includes the residual of the given PDE within the domain and the discrepancies between the neural network and the boundary conditions. More specifically, the neural network is trained to minimize the total loss
\begin{equation}\label{total_loss}
    \mathcal{L}(\tth)=\delta_r\mathcal{L}_r(\tth)+\delta_b\mathcal{L}_b(\tth),
\end{equation}
with each loss function defined as follows:
\begin{equation}\label{res_loss}
\mathcal{L}_r(\tth)=\frac{1}{N_r}\sum_{i=1}^{N_r}\left|\mathcal{D}[\vu](\boldsymbol{x}_r^i;\tth)-\vf(\boldsymbol{x}_r^i)\right|^2,\quad\mathcal{L}_b(\tth)=\frac{1}{N_b}\sum_{j=1}^{N_b}\left|\vu(\boldsymbol{x}_b^j;\tth)-\vg(\boldsymbol{x}_b^j)\right|^2.
\end{equation}
Here, $N_r$ and $N_b$ represent the batch sizes, while $\delta_r$ and $\delta_b$ are the weights associated with the residual and boundary data, respectively. Typically, $\boldsymbol{x}_r^i$ and $\boldsymbol{x}_b^j$ are randomly drawn from the uniform distributions
$\mathcal{U}(\Omega)$ and $\mathcal{U}(\Gamma)$, respectively. All derivatives required in the loss function can be efficiently and accurately computed using reverse-mode {\textit{automatic differentiation}} \cite{auto_diff_1, auto_diff_2}. This approach ensures that information about the boundary conditions propagates throughout the physical domain while adhering to the physical laws expressed as PDEs. This feature is the key reason we refer to it as a `physics-informed' neural network. If any sensor data is available for training, i.e., if the ground-truth data $(\bsx^i_s,\bsy^i_s)$ where $\bsy^i_s=\vu(\bsx_s^i)$ for $i=1,2,\cdots,N_s$ is given, an additional supervised loss term 
$\mathcal{L}_{\rm sup}(\tth)= \frac{1}{N_s}\sum^{N_s}_{i=1}|\vu(\bsx^i_s;\tth)-\bsy^i_s|^2$ can be incorporated into the total loss. This approach is known to enhance model performance. However, in this paper, we do not consider this type of additional loss term. A schematic representation of the PINN is shown in Figure \ref{comp_pinn}(a).

\subsection{Parametric PINN}\label{sec:para_PINN}
While PINNs have demonstrated their effectiveness across diverse engineering applications, their standard formulation is inherently tailored to individual problem settings. As a result, any modification to the parameters defining PDE problems, such as boundary or initial conditions, necessitates a complete retraining of the neural network. To mitigate this drawback, we introduce a parametric extension of the PINN framework (see, e.g., \cite{pPINN}), designed to address this issue. This enhanced approach incorporates an additional mechanism that enables rapid prediction of approximate solutions for varying parameters without the need for retraining, thus significantly broadening the practical applicability of PINNs.

Consider a parameter space $\mathcal{P} \subset \mathbb{R}^k$ for some $k \in \mathbb{N}$, and let $\boldsymbol{p} \in \mathcal{P}$ denote a vector of coefficients that governs the characteristics of the associated PDE problem. In particular, in the later numerical experiments, we will design a model that utilizes the coefficients, including Young's modulus $E$ and Poisson's ratio $\nu$, which determine the Lam\'e coefficients of the governing equations, as input features. The principal novelty behind the parametric PINN framework is the incorporation of problem-defining coefficients as additional inputs to the neural network. This extension allows the training process to be conducted over both the spatial domain $\Omega$ and the parameter space $\mathcal{P} \subset \mathbb{R}^k$, enabling the network to infer solutions that vary with respect to both spatial variables and coefficients. 

To elaborate further, consider a parameter vector $\boldsymbol{p} \in \mathcal{P}$ which defines the PDE data, such as initial conditions,  boundary conditions, and source terms. In standard PINNs, one seeks to minimize the physics-informed loss function
\[
\min_{\tth} \mathcal{L}(\tth; \boldsymbol{p}),
\]
for a fixed choice of $\boldsymbol{p}\in\mathcal{P}$. In contrast, the parametric PINN aims to learn a solution map over the product domain $\mathcal{P}\times\Omega$ by minimizing a population loss function integrated over all possible parameters:
\[
\mathcal{L}_{\mathrm{para}}(\boldsymbol{\theta}) := \int_{\mathcal{P}} \mathcal{L}(\boldsymbol{\theta}; \boldsymbol{p}) \, \mathrm{d}\eta(\boldsymbol{p}),
\]
for some proper measure $\eta$ over $\mathcal{P}$. In practice, this integral is approximated by an empirical loss function. Specifically, we draw the random points $\boldsymbol{x}^i_r$, $\boldsymbol{x}^j_b$, $\boldsymbol{p}^k_r$ and $\boldsymbol{p}^\ell_b$ i.i.d. from uniform distributions $\boldsymbol{x}^i_r \sim \mathcal{U}(\Omega)$, $\boldsymbol{x}^j_b \sim \mathcal{U}(\Gamma)$ and $\boldsymbol{p}^k_r,\boldsymbol{p}^\ell_b \sim \mathcal{U}(\mathcal{P})$ to define
\begin{equation} \label{parametric_loss}
\begin{aligned}
\mathcal{L}_{\mathrm{para},\,r}(\boldsymbol{\theta}) &= \frac{1}{N_{pr}} \frac{1}{N_r} \sum_{k=1}^{N_{pr}} \sum_{i=1}^{N_r} \left| \mathcal{D}_{\boldsymbol{p}_r^k}[\vu](\boldsymbol{x}_r^i; \boldsymbol{p}_r^k; \boldsymbol{\theta}) - \vf(\boldsymbol{x}_r^i; \boldsymbol{p}_r^k) \right|^2, \\
\mathcal{L}_{\mathrm{para},\,b}(\boldsymbol{\theta}) &= \frac{1}{N_{pb}} \frac{1}{N_b} \sum_{\ell=1}^{N_{pb}} \sum_{j=1}^{N_b} \left| \vu(\boldsymbol{x}_b^j; \boldsymbol{p}_b^\ell; \boldsymbol{\theta}) - \vg(\boldsymbol{x}_b^j; \boldsymbol{p}_b^\ell) \right|^2,
\end{aligned}
\end{equation}
where $N_{pr}$ and $N_{pb}$ are the numbers of random samples from the parametric domain $\mathcal{P}$ for the PDE and the boundary condition respectively. Here, $\mathcal{D}_{\boldsymbol{p}}$ denotes the differential operator associated with the parameter $\boldsymbol{p}$, and the functions $\vf(\cdot\,;\boldsymbol{p})$ and $\vg(\cdot\,;\boldsymbol{p})$ represent parameter-dependent source terms and boundary data, respectively. The total parametric loss is then constructed as a weighted sum of the above two components:
\begin{equation} \label{parametric_loss1}
\mathcal{L}_{\mathrm{para}}(\boldsymbol{\theta}) = \delta_r \mathcal{L}_{\mathrm{para},\,r}(\boldsymbol{\theta}) + \delta_b \mathcal{L}_{\mathrm{para},\,b}(\boldsymbol{\theta}),
\end{equation}
where $\delta_r, \delta_b > 0$ are appropriately chosen weights balancing the residual and boundary terms. Once training is completed, the neural network $\vu(\cdot,\cdot\,; \boldsymbol{\theta}): \Omega \times \mathcal{P} \to \mathbb{R}^m$ provides solution predictions for any given spatial location and parameter vector, enabling real-time evaluation across a wide range of parameters. A schematic overview of the parametric PINN architecture is illustrated in Figure \ref{comp_pinn}(b).

\section{Effects of incompressibility: Volumetric locking in PINN training}\label{sec_heuristic}

In this section, we introduce a heuristic perspective on why the training of a standard PINN fails when applied to nearly incompressible linear elasticity equations. To simplify the heuristics, we assume that the material is homogeneous (i.e., $\lambda$ and $\mu$ are constant) and $\vg = {\bf 0}$. We consider a straightforward implementation of PINNs for linear elasticity, where, in this case, the loss function is of the form
\begin{align}\label{loss_heur}
\mathcal{L}(\tth) = & \frac{\delta_r}{N_r} \sum_{i=1}^{N_r} \left| \mathcal{D}(\vu)(\bsx^i_r; \tth) + \vf(\bsx^i_r) \right|^2 +  \frac{\delta_b}{N_b} \sum_{j=1}^{N_b} \left| \vu(\bsx^j_b; \tth) \right|^2, 
\end{align}
where $\mathcal{D}(\vu) = \nabla \cdot (\lambda \nabla \cdot \vu {\bf I}) + \nabla \cdot (\mu \veps(\vu))$.

Note that if we replace $\vf$ in \eqref{eq:EE} by another function $\vf_2$ which satisfies $\vf_2(\bsx_r^i) = \vf(\bsx_r^i)$ we obtain the same loss function \eqref{loss_heur}. However, the exact solution $\vu$ of \eqref{eq:EE} with right-hand sides $\vf$ differs from the exact solution $\vu_2$ of \eqref{eq:EE} with right-hand side $\vf_2$ in general. Hence, there is no unique solution $\vu$ of \eqref{eq:EE} associated with the minimizer of the loss function \eqref{loss_heur}. Hence, the discrete loss function \eqref{loss_heur} does not have a unique minimizer in general (for instance, both $\vu$ and $\vu_2$ minimize the loss function). In the following, we illustrate that, when $\lambda$ is large, the set of minimizers of \eqref{loss_heur} contains a `solution' which is close to zero, and that standard PINN approximates this solution.

When training a neural network using a gradient descent method, the starting point of the gradient descent method determines which minimizer the optimization procedure will converge to. We discuss this in the context where $\lambda$ becomes large. The gradient flow equation, given by
\begin{equation*}
\frac{\mathrm{d}}{\mathrm{d} t} \tth(t) = - \nabla_{\tth} \mathcal{L}(\tth),
\end{equation*}
models the training behavior of the coefficients $\tth$ of a neural network when the step-size of the training goes to $0$ such that the parameters $\tth$ depend continuously on the training time $t$. Explicitly,
\begin{equation}\label{loss_gradient}
\begin{aligned}
\nabla_{\tth} \mathcal{L}(\tth) & = \frac{2 \delta_r}{N_r} \sum_{i=1}^{N_r}  \left( \mathcal{D}(\vu)(\bsx^i_r; \tth) + \vf(\bsx^i_r) \right) \mathcal{D}( \nabla_{\tth} \vu)(\bsx_r^i; \tth) + \frac{2 \delta_b}{N_b} \sum_{j=1}^{N_b}  \vu(\bsx^j_b; \tth)  \nabla_{\tth} \vu(\bsx_b^j; \tth) \\ & = \lambda^2 \va + \lambda \mu \vb + \mu^2 \vc + \vd,
\end{aligned} 
\end{equation}
where
\begin{align*}
\va & = \frac{2 \delta_r}{N_r} \sum_{i=1}^{N_r}  [ \nabla \cdot (\nabla \cdot ( \vu(\bsx_i^{e}; \tth)  {\bf I}) ] [\nabla \cdot (\nabla \cdot (\nabla_{\tth} \vu(\bsx_i^{e}; \tth) ) {\bf I}) ],  \\
\vb & = \frac{2 \delta_r}{N_r} \sum_{i=1}^{N_r}  [ \nabla \cdot (\nabla \cdot ( \vu(\bsx_i^{e}; \tth)  {\bf I}) ] [ \nabla \cdot \veps( \nabla_{\tth} \vu(\bsx_i^{e}; \tth) ) ] +  [\nabla \cdot (\nabla \cdot (\nabla_{\tth} \vu(\bsx_i^{e}; \tth) ) {\bf I}) ] [ \nabla \cdot \veps(\vu(\bsx_i^{e}; \tth)) ], \\
\vc & = \frac{2 \delta_r}{N_r} \sum_{i=1}^{N_r} [ \nabla \cdot \veps(\vu(\bsx_i^{e}; \tth)) ] [ \nabla \cdot \veps( \nabla_{\tth} \vu(\bsx_i^{e}; \tth)) ], \\
\vd & =    \frac{2 \delta_b}{N_b} \sum_{j=1}^{N_b} \vu(\bsx^j_b; \tth)  \nabla_{\tth} \vu(\bsx_b^j; \tth).
\end{align*}

Assume that the sampling points $\bsx_r^i, \bsx_b^j$ and the initial neural network for the PINN training have been chosen randomly, irrespective of the values of $\lambda$ and $\mu$.\footnote{Note that the solution $\vu^\ast$ of the linear elasticity equation changes if we change $\lambda$ and $\mu$.} This random choice induces distributions on the vectors $\va, \vb, \vc, \vd$, which are again independent of $\lambda$ and $\mu$. 
At the initialization of the gradient descent algorithm, the direction of the gradient of the loss function $\nabla_{\tth} \mathcal{L}$ depends, in particular, on $\lambda, \mu, \va, \vb, \vc, \vd$. As $\lambda$ increases, the direction $\va$ becomes more dominant. To illustrate this, consider
\begin{equation}\label{standard_grad}
\frac{1}{\lambda^2} \nabla_{\tth} \mathcal{L} = \va + \frac{\mu}{\lambda} \vb + \frac{\mu^2}{\lambda^2} \vc + \frac{1}{\lambda^2} \vd.
\end{equation}
As $\lambda$ increases, the direction of the gradient descent algorithm is increasingly aligned with the direction of $\va$ as the term $\frac{\mu}{\lambda} \vb + \frac{\mu^2}{\lambda^2} \vc$ becomes negligible (the importance of the boundary term $\frac{1}{\lambda^2} \vd$ can be adjusted via the weight $\delta_b$). This means that with increasing probability, in the optimization procedure, $\tth$ is modified  in a direction which minimizes the loss function
\begin{equation}\label{loss_reduced}
\overline{\mathcal{L}} = \frac{\delta_r}{N_r} \sum_{i=1}^{N_r} | \nabla \cdot (\nabla \cdot \vu(\bsx_r^i; \tth) {\bf I}) |^2 + \frac{\delta_b}{N_b} \sum_{j=1}^{N_b} |\vu(\bsx_b^j; \tth)|^2,
\end{equation}
since $\nabla_{\tth} \overline{\mathcal{L}} = \lambda^2 \va + \vd$. However, minimizing the loss function \eqref{loss_reduced} corresponds to approximating a function $\bar{\vu}$ which minimizes the loss function
\begin{equation}\label{eq_heur}
\delta_r \left \| \nabla \cdot (\nabla \cdot \vu {\bf I}) \right\|_{L_2(\Omega)}^2 + \delta_b \left\| \vu \right\|_{L_2(\partial \Omega}^2.
\end{equation}
However, it is easy to verify that the minimizer of \eqref{eq_heur} is the function which is $\vzero$ everywhere. In other words, the basin of attraction for gradient descent methods for the $\vzero$ solution increases with $\lambda$, so that for nearly incompressible materials the standard PINN leads to the $\vzero$ solution unless the initial guess is already very close to the true solution. In particular, we observe this kind of behavior in the numerical experiments for large $\lambda$ and zero Dirichlet boundary conditions using a standard PINN approach in Section~\ref{SectionEx1}.

\begin{figure}
    \centering    \includegraphics[width=0.4\textwidth]{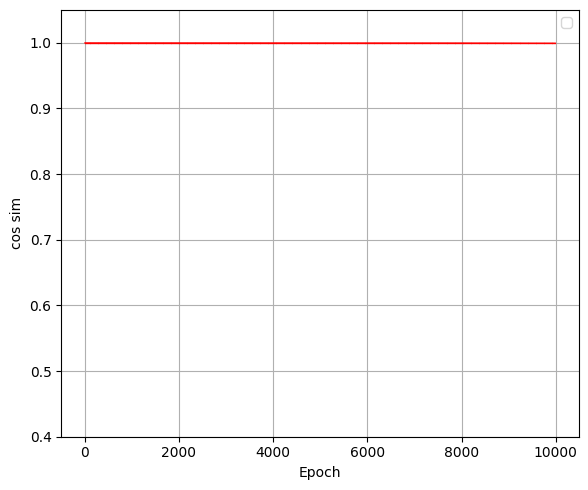}
    \caption{Cosine similarity between $\nabla_{\tth}\mathcal{L}$ and $\nabla_{\tth}\overline{\mathcal{L}}$ during training. This demonstrates that minimizing the loss $\mathcal{L}$ proceeds almost identically to minimizing the loss $\overline{\mathcal{L}}$, thereby explaining the occurrence of the locking phenomenon in PINN training.}
    \label{cos_sim}
\end{figure}

To make this heuristic more concrete, we conducted a numerical experiment. Specifically, we initialized two neural networks identically and trained them separately using loss functions $\mathcal{L}$ in \eqref{loss_heur} and $\overline{\mathcal{L}}$ in \eqref{loss_reduced} respectively, while observing their training dynamics. To examine the directions of the parameter updates during training, we recorded quantities $\nabla_{\tth}\mathcal{L}$ and $\nabla_{\tth}\overline{\mathcal{L}}$ at each epoch and measured the cosine similarity between them. Interestingly, as shown in Figure \ref{cos_sim}, the cosine similarity between $\nabla_{\tth}\mathcal{L}$ and $\nabla_{\tth}\overline{\mathcal{L}}$ remained close to 1 throughout the training process. This indicates that, when $\lambda$ is large, the training of a standard PINN proceeds almost as if it were minimizing the loss function $\overline{\mathcal{L}}$. However, since the solution obtained by minimizing $\overline{\mathcal{L}}$ differs from the true solution of the original PDE, this explains why the training of a standard PINN fails to converge properly in the nearly incompressible regime. In other words, when solving linear elasticity problems with large $\lambda$ using PINNs, an imbalance among the terms in the loss function leads to a phenomenon similar to locking, where the training does not proceed correctly. Much like in classical FEM, overcoming this locking effect remains a significant computational challenge. In the next section, we propose a method to address this issue, thereby enabling PINNs to effectively compute solutions in the nearly incompressible regime.

\section{Robust PINNs for nearly incompressible equations}\label{sec_method}
Using PINNs for solving the elasticity problem \eqref{eq:EE}-\eqref{eq:sig} for the case of nearly incompressible materials leads to a non-robust solution (locking), and the training of the PINN completely fails (see, e.g., Figure \ref{Locking_PINN}). As discussed earlier, this phenomenon is mainly due to the extreme imbalance between the coefficients $\mu$ and $\lambda$. To avoid the non-robustness issue of PINNs, we separate the effects of these coefficients by decomposing the given equations so that we can efficiently handle the large parameter $\lambda$. To be more specific, if we  set $\vu=\widehat{\vu}+\widetilde{\vu}$ for some $\widehat{\vu}$ and $\widetilde{\vu}$ in suitable function spaces, we may rewrite \eqref{eq:EE}-\eqref{eq:sig} as: with $\vsigma_\lambda(\vv) = \lambda\,\bigl(\nabla\cdot \vv\bigr)\,{ \bf I}$ and $\vsigma_\mu(\vv) =  2\mu\,\veps(\vv),$
\begin{equation}\label{eq:LL1}
\begin{aligned}
-\nabla \cdot \vsigma_\lambda(\vu) - \nabla \cdot \vsigma_\mu(\vu) =  -\nabla \cdot \vsigma_\lambda(\widehat{\vu}+\widetilde{\vu})-\nabla\cdot
\vsigma_\mu(\widehat{\vu}+\widetilde{\vu}) &= \vf \quad \,\text{in $\Omega$},\\
\vu = \widehat{\vu}+\widetilde{\vu}&=\vg\quad \text{on $\Gamma$}.
\end{aligned}
\end{equation}
This motivates us to split \eqref{eq:LL1} into two different systems of PDEs describing $\widehat{\vu}$ and $\widetilde{\vu}$ as: 
\begin{equation}\label{eq:dec_1}
\begin{aligned}
 -\nabla \cdot \vsigma_\lambda(\widehat{\vu}) &= \widehat{\vf}_\lambda \quad \,\,\,\, \text{in $\Omega$},\\
 -\nabla\cdot
\vsigma_\mu(\widehat{\vu})&= \widehat{\vf}_\mu \quad \,\,\,\,\text{in $\Omega$},\\
\widehat{\vu}&=\widehat \vg\quad\,\,\,\,\,\text{on $\Gamma$},
\end{aligned}
\end{equation}
and
\begin{equation}\label{eq:dec_2}
\begin{aligned}
 -\nabla \cdot \vsigma_\lambda(\widetilde{\vu})&= \boldsymbol{0} 
 \quad\,\,\,\, \,\,\text{in $\Omega$},\\
 -\nabla\cdot
\vsigma_\mu(\widetilde{\vu}) &=\widetilde{\vf}_\mu \quad \,\,\,\,\,\text{in $\Omega$},\\
\widetilde{\vu}&=\widetilde{\vg}\quad\,\,\,\,\,\,\text{on $\Gamma$},
\end{aligned}
\end{equation}
with
\begin{equation}\label{eq:dec_4}
\vf = \widehat{\vf}_\lambda +\widetilde{\vf}_\mu + \widehat{\vf}_\mu\quad{\rm and}\quad \vg =  \widehat \vg+\widetilde \vg.
\end{equation}
Let $(\widehat{\vu}, \widetilde{\vu})$ be any pair of solutions to \eqref{eq:dec_1} and \eqref{eq:dec_2}. Then it is easy to verify that $\widehat{\vu}+\widetilde{\vu}$ satisfies the original problem \eqref{eq:EE}. Since we assume that \eqref{eq:EE} has a unique solution $\vu$, the sum $\widehat{\vu}+\widetilde{\vu}$ $(=\vu)$ is uniquely determined, regardless of the decomposition $\widehat{\vu}$ and $\widetilde{\vu}$. As we can see from the decomposed system \eqref{eq:dec_1}-\eqref{eq:dec_2}, the original forcing term
$\vf$ is separated into three terms: $ \widehat{\vf}_\lambda$ is the term related to $\lambda$, and $\widehat{\vf}_\mu, \widetilde{\vf}_\mu$ are the terms related to $\mu$. By considering the terms associated with $\lambda$ and $\mu$ separately, we can expect that the imbalance between $\lambda$ and $\mu$, which causes locking, will be resolved, and the negative impact of the large magnitude of $\lambda$ for computing solutions is relatively alleviated. In fact, we can divide \eqref{eq:dec_2}$_1$ by $\lambda$ (or $\|\lambda\|_{L^\infty}$ when $\lambda$ is a variable function) and hence $\lambda$ does not influence this equation at all. Thus $\lambda$ does not directly appear in \eqref{eq:dec_2}.  Note here that the functions $\widehat{\vf}_\lambda, \widehat{\vf}_\mu, \widetilde{\vf}_\mu$ and $\widehat{\vg}, \widetilde{\vg}$ are not known a priori and are determined by solving inverse problems, as part of the training of the neural network. 

Regarding the existence of a solution of \eqref{eq:dec_1} and \eqref{eq:dec_2}, we first note that under certain assumptions on the boundary $\Gamma$ of the body $\Omega$ and for sufficiently regular $\mu, \widehat{\vf}_\mu, \widehat{\vg}$, it is known from the linear elliptic PDE theory (see, e.g., \cite{Trudinger}) that \eqref{eq:dec_1}$_2$-\eqref{eq:dec_1}$_3$ has a solution $\widehat{\vu}(\widehat{\vf}_\mu, \widehat{\vg})$. By plugging $\widehat{\vu}(\widehat{\vf}_\mu, \widehat{\vg})$ into \eqref{eq:dec_1}$_1$ we obtain a function $\widehat{\vf}_\lambda$. Now set $\widetilde{\vu} = \vu - \widehat{\vu}(\widehat{\vf}_\mu, \widehat{\vg})$, where $\vu$ is the solution of \eqref{eq:LL1}. Then the linearity of $\vsigma_\lambda, \vsigma_\mu$ and the fact that $\widehat{\vf}_\lambda + \widehat{\vf}_\mu + \widetilde{\vf}_\mu = \vf$ implies that $\widetilde{\vu}$ satisfies \eqref{eq:dec_2}$_1$ and \eqref{eq:dec_2}$_2$. Since $\vu$ is $\vg$ on $\Gamma$ and $\widehat{\vu}$ is $\widehat{\vg}$ on $\Gamma$, we also obtain that $\widetilde{\vu}$ is $\vg - \widehat{\vg} = \widetilde{\vg}$ on $\Gamma$. Thus $\widetilde{\vu}$ satisfies \eqref{eq:dec_2}. 

Note here that, in the decomposition \eqref{eq:dec_1}-\eqref{eq:dec_2}, the sum $\widehat{\vu}+\widetilde{\vu}$ recovers the solution $\vu$ of the original problem \eqref{eq:EE}, yet the accompanying allocation of body forces and boundary data need not be fixed. Different admissible choices of $(\widehat{\vf}_\lambda, \widehat{\vf}_\mu, \widetilde{\vf}_\mu;\, \widehat{\vg}, \widetilde{\vg})$ can support distinct constructions of $\widehat{\vu}$ and $\widetilde{\vu}$ while leaving their sum unchanged. For example, if we let
\[\widehat{\vu}_2(\bsx)  = \widehat{\vu}(\bsx) + Q \bsx + \bsd\quad{\rm and}\quad 
\widetilde{\vu}_2(\bsx)  = \widetilde{\vu}(\bsx) - Q \bsx - \bsd,\]
where $\bsd \in \mathbb{R}^d$ and $Q $ is a skew-symmetric matrix, then $(\widehat{\vu}_2, \widetilde{\vu}_2)$ is also a solution of \eqref{eq:dec_1} and \eqref{eq:dec_2}. More generally, choosing $\widetilde{\vu}$ to satisfy \eqref{eq:dec_2}$_1$ and \eqref{eq:dec_2}$_3$ determines $\widetilde{\vf}_\mu$ via \eqref{eq:dec_2}$_2$, after which setting $\widehat{\vu} := \vu-\widetilde{\vu}$ enforces (4.2). Thus, multiple load-boundary configurations can realize the same overall solution through the decomposition.

\begin{figure}
    \centering    \includegraphics[width=1\textwidth]{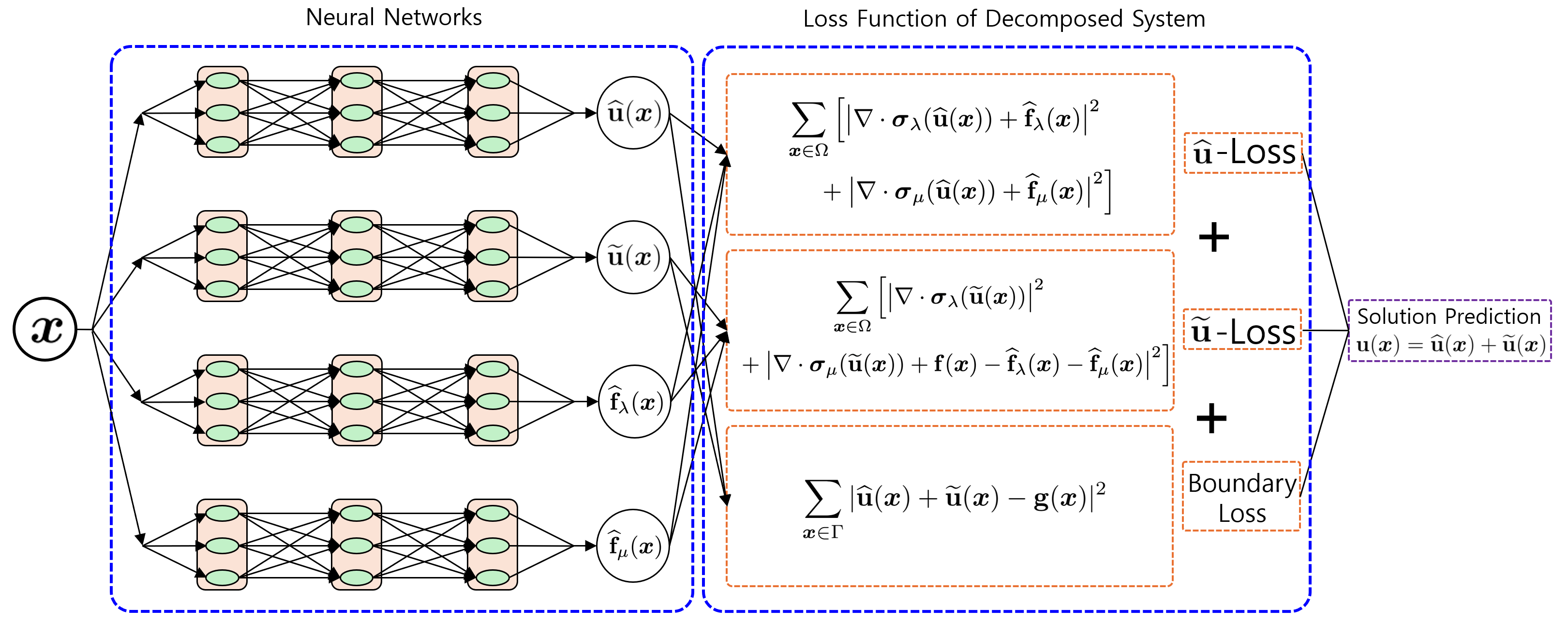}
    \caption{A schematic diagram of the proposed method using a decomposition. Three independent neural networks are utilized for the three intermediate functions $\widehat{\vu}$, $\widetilde{\vu}$, and $\widehat{\vf}$, and the loss functions are defined based on the decomposition \eqref{eq:dec_1}-\eqref{eq:dec_2}.}
    \label{PINN_scheme_decomp}
\end{figure}

We shall now employ PINNs to solve the decomposed systems \eqref{eq:dec_1}-\eqref{eq:dec_2}. Owing to the flexibility of PINNs in incorporating problem-specific physics through the design of appropriate loss functions, this framework enables a direct mathematical formulation of the previously discussed scenario. The auxiliary functions $\widehat{\vf}_\lambda, \widehat{\vf}_\mu, \widetilde{\vf}_\mu$ as well as the boundary functions $\widehat{\vg}, \widetilde{\vg}$ will themselves be modeled by neural networks, which will be automatically adjusted during the PINN training process. For the input value $\bsx$, we shall consider four independent fully-connected neural networks, where each neural network generates $\widehat{\vu}$, $\widetilde{\vu}$, and $\widehat{\vf}_\lambda, \widehat{\vf}_\mu$ as outputs, respectively. At first, we initialize the parameters of the network for $\widehat{\vf}_\lambda$, $\widehat{\vf}_\mu$ so that $\widehat{\vf}_\lambda=\frac{1}{3}\vf$ and $\widehat{\vf}_\mu=\frac{1}{3}\vf$ are satisfied. Note that our method’s performance is not sensitive to the choice $\frac{1}{3}$ here, and other values are also admissible. We then apply automatic differentiation to these output values in accordance with the systems in  \eqref{eq:dec_1} and \eqref{eq:dec_2}. Specifically, for random samples from the interior of the domain $\Omega$, we will compute the residuals of the equations in \eqref{eq:dec_1} and \eqref{eq:dec_2}, and for random samples at the boundary, we will set up the loss function so that the sum of $\widehat{\vu}$ and $\widetilde{\vu}$ equals $\vg$ so that the intermediate boundary value $\widehat\vg$ is automatically adjusted during the training. 

The above discussions lead us to consider the following form of the loss function
\begin{equation}\label{main_loss}
\begin{aligned}
\mathcal{L}\Big(\widehat{\vu},\widetilde{\vu},\widehat{\vf}\Big)
=&\frac{1}{N_r}\sum^{N_r}_{i=1}\left( \delta_{r,1} \big|\nabla \cdot \vsigma_\lambda(\widehat{\vu})(\bsx^i_r) +  \widehat{\vf}_\lambda(\bsx^i_r)  \big|^2+  \delta_{r,2} \big|\nabla\cdot  \vsigma_\mu(\widehat{\vu})(\bsx^i_r) + \widehat{\vf}_\mu(\bsx^i_r) \big|^2 \right)\\
&+\frac{1}{N_s}\sum^{N_s}_{j=1}\left( \delta_{s,1} \big| \nabla \cdot  \vsigma_\lambda(\widetilde{\vu})(\bsx^j_s)  \big|^2 +  \delta_{s,2} \big|\nabla\cdot\vsigma_\mu(\widetilde{\vu})(\bsx^j_s) +\vf(\bsx^j_s) - \widehat{\vf}_\lambda(\bsx^j_s) - \widehat{\vf}_\mu(\bsx^j_s) \big|^2\right)\\
&+\frac{\delta_b}{N_b}\sum^{N_b}_{k=1}\big|\widehat{\vu}(\bsx^k_b)+\widetilde{\vu}(\bsx^k_b)-\vg(\bsx^k_b) \big|^2,
\end{aligned}
\end{equation}
where for some weight parameters $\delta_r, \delta_s, \delta_b$, we set $\delta_{r,1} = \delta_{r,2}  = \delta_{r}$ and $\lambda^2 \delta_{s,1} = \delta_{s,2} = \delta_s$. 
In the case of inhomogeneous $\lambda$, where $\lambda(\bsx) = \Lambda \tilde{\lambda}(\bsx)$ for some constant $\Lambda$, we use $\delta_{s,1} = \delta_s / \Lambda^2$. 

The interior points $\bsx^i_r$ and $\bsx^j_s$ are i.i.d. randomly sampled from the uniform distribution $\mathcal{U}(\Omega)$ and the boundary samples $\bsx^k_b$ are i.i.d. randomly chosen from $\mathcal{U}(\Gamma)$. The graphical illustration of the proposed method can be found in Figure~\ref{PINN_scheme_decomp}. In the following section, we demonstrate through various numerical experiments that the proposed method is effective for the training of PINN for this problem and successfully resolves the locking phenomenon.

\section{Error analysis}\label{sec_theory}

In the following, we analyze the approximation error under the assumption that the optimization procedure yields a neural network which approximates the true solution. The inner product on the space ${\bf L}^2(\Omega)=[L^2(\Omega)]^d$ is denoted by $\ip{\cdot,\cdot}$, and is the associated norm is written as $\|\cdot\|$, 
where for $\vv, \vw \in {\bf L}^2(\Omega)$,
\[\ip{\vv,\vw}=\int_{\Omega} \vv\cdot \vw\,{\rm{d}}\bsx ~~{\rm and}~~\|\vv\| = \sqrt{\ip{\vv,\vv}}.
\]
For the inner product and the norm on ${\bf L}^2(\partial \Omega)$, we use the notations $\ip{\vv,\vw}_{\partial\Omega}$ and $\|\cdot\|_{\partial \Omega}$.

Let ${\bf H}^2(\Omega)=[H^2(\Omega)]^d$, where $H^2(\Omega)$ is the standard Sobolev space. The norm on ${\bf H}^2(\Omega)$ is denoted by $\|\cdot\|_2.$ The error is measured using the fractional norm $\|\cdot\|_{1/2}$, defined by: $\|\vw\|^2_{1/2}=\|\vw\|^2+|\vw|_{1/2}^2$, where $|\cdot|_{1/2}$ is the (Slobodeckij) seminorm on the fractional Sobolev space \([H^{1/2}(\Omega)]^d\)  defined  as: 
\[
|\vw|^2_{1/2} = \int_\Omega \int_\Omega \frac{|\vw(\bsx) - \vw(\bsy)|^2}{|\bsx - \bsy|^{d + 1}} \, {\rm{d}}\bsx \, {\rm{d}}\bsy.
\]
Define the following energy functional 
$\calE: {\bf H}^2(\Omega) \to \R$ by
\begin{equation}\label{eq:Eu}
\calE (\vv) = \delta_r \| \nabla \cdot \vsigma(\vv)+ \vf \|^2 + \delta_b \|\vv-\vg\|^2_{\partial\Omega},
\end{equation}
where $\delta_r, \delta_b$ are constants as defined in
\eqref{total_loss}. Solving the linear elasticity equation \eqref{eq:EE} is equivalent to minimizing the energy functional $\calE$ over the Sobolev space ${\bf H}^2(\Omega)$. 

A neural network with $L$ layers, each layer has $n_\ell$ neurons, for $\ell=0,\ldots,L$ can be modeled as the following structure
\[
\tth = (W_1, \vb_1, W_2, \vb_2,\ldots, W_L,\vb_L),
\]
where $W_{\ell} \in \R^{n_{\ell} \times n_{\ell-1}}$, for
$\ell=1,2,\ldots,L$ are the weight matrices
and $\vb_\ell \in \R^{n_\ell}$ are the bias vectors.   
Given an input vector $\vz_0 =\bsx \in \R^{n_0}$, the output vector $\va_L$ of the network $\tth$ is defined by
\begin{equation}\label{eq:ThetaN}  
    \va_L = W_{L} \vz_{L-1} + \vb_L,
\end{equation}
where  $\vz_{\ell} = \rho(W_{\ell} \vz_{\ell-1} + \vb_\ell)$ for  $\ell=1,2,\ldots,L-1,$ and $\rho$ is some given activation function. The
neural network function with parameters $\tth$ and the activation function $\rho$ is the function
\begin{equation}
  \vv_{\tth}:\R^{n_0} \to \R^{n_L},
  \quad \bsx \mapsto \va_L(\bsx).
\end{equation}
We denote by $\Theta$ the parameter space which contains all possible values of $\tth$. It can be seen that $\Theta = \R^p$ for some integer $p$ which depends on the values $n_{\ell}$, for $\ell=0,1,\ldots,L$. The collection of all network functions with a certain parameter space $\Theta$ is denoted by $\calF_{\Theta}$.
The PINN approach to solve the linear elasticity equation \eqref{eq:EE} is to minimize the energy $\calE$ given in \eqref{eq:Eu} over the function class $\calF_{\Theta}$. More precisely, the network will minimize a loss functional $\calL: \Theta \to \R$ (see \eqref{total_loss}), such that $\calL(\tth) \approx \calE(\vu_{\tth}).$ Let $\tth^*$ minimize $\calL(\tth)$ inexactly and let $\vu_{\tth^*}$ be the corresponding neural network solution, then
\begin{equation}\label{eq:estE}
    \|\vu_{\tth^*} - \vu\|^2_{1/2}
    \lesssim  \calL(\tth^*) + \eta(\tth^*),
\end{equation}
where $\eta(\tth^*) = \calE(\vu_{\tth^*}) - \calL(\tth^*)$ is a quadrature error. Furthermore,
\begin{equation}\label{eq:Cea lemma}
    \|\vu_{\tth^*} - \vu\|^2_{1/2}
    \lesssim \kappa(\vu_{\tth^*})
    + \inf_{\vu_{\tth}\in \calF_{\Theta}}
    \|\vu_{\tth} - \vu\|^2_{2},
\end{equation}
where the optimization error $\kappa(\vu_{\tth^*}) =
\calE(\vu_{\tth^*}) - \inf_{\vu_{\tth}} \calE(\vu_{\tth})$. See \cite{ZeinhoferMasriMardal2024} for the proof of these error estimate results.
We aim next to show similar results for the proposed decomposed  PINN method. For convenience, we introduce the following notations: let 
\[\vsigma_D=\begin{bmatrix}
    \vsigma_\lambda\\\vsigma_\mu\end{bmatrix},\quad \widehat \vf_D=\begin{bmatrix}\,
    \widehat{\vf}_\lambda\, \\ \widehat{\vf}_\mu\end{bmatrix},\quad \widetilde  \vf_D=\begin{bmatrix}\,
    \,{\bf 0}\,\\ \widetilde{\vf}_\mu\, \end{bmatrix},\quad \vv_D=\begin{bmatrix}\,
    \widehat \vv\\ \widetilde \vv\, \end{bmatrix},\quad \vv_S=\widehat \vv+\widetilde \vv.\]
The loss function $\calL_D$ is defined as  in \eqref{main_loss} with $\delta_{r,1}=\delta_{r,2}=\delta_r$ and $\delta_{s,1}=\delta_{s,2}=\delta_s$. The associated energy functional
\begin{equation}\label{eq:Euvf}
\begin{aligned}
\calE_D(\vv_D) &= 
\delta_r\| \nabla \cdot \vsigma_D(\widehat \vv) + \widehat{\vf}_D\|^2 + 
\delta_s \|\nabla \cdot \vsigma_D(\widetilde \vv) + \widetilde {\vf}_D\|^2 
 +\delta_b \|\vv_S-\vg\|^2_{\partial\Omega},
\end{aligned}
\end{equation}
where it is understood that in the first two terms on the RHS of
equation~\eqref{eq:Euvf} the vectors are in $[L^2(\Omega)]^{2d}$.
Hence, our PINN approach is to solve two systems 
\eqref{eq:dec_1} and \eqref{eq:dec_2} simultaneously by minimizing $\calE_D$ over the function class $\calF_{\Theta} \times \calF_{\Theta}$. More precisely, the networks will minimize a loss functional
\[
\calL_D: \Theta \times \Theta \to \R, \quad \calL_D(\tth_D)=\calL(\tth_D) \approx 
\calE_D(\vu_{D,\tth}) = \calE_D(\widehat \vu_{\theta_1}, \widetilde \vu_{\theta_2}).
\]
Define the following bilinear operator  $a_D: [H^2(\Omega)]^{2d} \times [H^2(\Omega)]^{2d} \to \R$ as
\begin{equation}\label{eq:bilinear}
a_D(\vv_D,\vw_D) =   
\delta_r \ip{\nabla \cdot \vsigma_D(\widehat \vv),\nabla \cdot \vsigma_D(\widehat \vw) } 
+ \delta_s \ip{ \nabla \cdot\vsigma_D(\widetilde \vv),\nabla \cdot\vsigma_D(\widetilde \vw)}  
+\delta_b \ip{\vv_S, \vw_S}_{\partial\Omega}.
\end{equation}
Define the bilinear operator  $a: {\bf H}^2(\Omega) \times {\bf H}^2(\Omega) \to \R$ as 
\begin{equation}\label{bilinear a}
    a(\vv, \vw) = 
    \delta_r \ip{\nabla \cdot \vsigma(\vv),
    \nabla \cdot \vsigma(\vw)}
    + \delta_b \ip{\vv,\vw}_{\partial\Omega},
\end{equation}
Hence, by linearity and with the help of the Cauchy--Schwarz inequality,   
\begin{align*}
    a(\vv_S, \vv_S) &= 
    \delta_r \|\nabla \cdot (\vsigma_\lambda+\vsigma_\mu)(\widehat \vv+\widetilde \vv)\|^2
    + \delta_b \|\vv_S\|^2_{\partial\Omega}\\
     &\lesssim   
 \|\nabla \cdot \vsigma_\lambda(\widehat \vv)\|^2+\|\nabla \cdot \vsigma_\mu(\widehat \vv)\|^2 
+\|\nabla \cdot\vsigma_\lambda(\widetilde \vv)\|^2+\|\nabla \cdot\vsigma_\mu(\widetilde \vv)\|^2 
+\delta_b \|\vv_S\|^2_{\partial\Omega}\\
&\lesssim a_D(\vv_D,\vv_D).
\end{align*}
Therefore, by using the coercivity property of $a(\cdot, \cdot)$ in \cite[Lemma 3]{ZeinhoferMasriMardal2024},  we deduce that for $\widehat \vv,\,\widetilde \vv \in {\bf H}^2(\Omega)$, there holds
\begin{equation}\label{lem:coercive}
\|\vv_S\|^2_{1/2}=\|\widehat \vv+\widetilde \vv\|^2_{1/2}\lesssim a(\vv_S,\vv_S)
\lesssim a_D(\vv_D,\vv_D).    
\end{equation}
\begin{theorem}
Let $\tth^*_D$ minimize $\calL_D(\tth_D)$ inexactly, and let $\vu_{D,\tth^*}$  be the corresponding neural network solutions, then
\begin{equation}\label{eq:estEd}
    \|\vu_{S,\tth^*}  - \vu\|^2_{1/2}
    \lesssim  \calL_D(\tth^*_D) + \eta_D(\tth^*_D),
\end{equation}
where $\vu_{S,\tth^*}=\widehat{\vu}_{\theta_1^*} + \widetilde{\vu}_{\theta_2^*}$, $\vu =\vu_S= \widehat{\vu} + \widetilde{\vu}$ and $\eta_D(\tth^*_D) = 
\calE_D (\vu_{D,\tth^*}) -
\calL_D(\tth^*_D)$ 
is a quadrature error. Furthermore,
\begin{equation}\label{cea lem}
  \|\vu_{S,\tth^*} - \vu\|^2_{1/2}
    \lesssim
    \kappa(\vu_{D,\tth^*}) + 
    \inf_{ \vu_{D,\tth} \in \calF_{\Theta} \times \calF_{\Theta}} \|\vu_{D,\tth}-\vu_D\|^2_{2},
\end{equation}
with $\kappa(\vu_{D,\tth^*}) := \calE_D(\vu_{D,\tth^*})   
- \inf_{ \vu_{D,\tth} \in \calF_{\Theta} \times \calF_{\Theta}} \calE_D(\vu_{D,\tth})$.
\end{theorem}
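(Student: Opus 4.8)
The plan is to follow the route used for the vanilla-PINN estimates \eqref{eq:estE}--\eqref{eq:Cea lemma} in \cite{ZeinhoferMasriMardal2024}, with the coercivity chain \eqref{lem:coercive} playing the part of \cite[Lemma 3]{ZeinhoferMasriMardal2024}. First I would fix the reference decomposition: let $\vu_D=(\widehat\vu,\widetilde\vu)$ be the chosen pair solving \eqref{eq:dec_1}--\eqref{eq:dec_2} together with its associated exact data $\widehat\vf_D$, $\widetilde\vf_D$, and with $\vu_S=\widehat\vu+\widetilde\vu=\vu$. By \eqref{eq:dec_1}--\eqref{eq:dec_4} every residual appearing in $\calE_D$ in \eqref{eq:Euvf} vanishes at $\vu_D$, so $\calE_D(\vu_D)=0$; moreover $\widehat\vf_D=-\nabla\cdot\vsigma_D(\widehat\vu)$, $\widetilde\vf_D=-\nabla\cdot\vsigma_D(\widetilde\vu)$ and $\vg=\vu_S|_{\partial\Omega}$.

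The key algebraic step is to rewrite $\calE_D$ as a quadratic form in the error. Substituting the three identities above into \eqref{eq:Euvf} and using linearity of $\vsigma_\lambda,\vsigma_\mu$ and of the trace map, for arbitrary $\widehat\vv,\widetilde\vv\in\bfH^2(\Omega)$ one gets
\[
\calE_D(\vv_D)=\delta_r\|\nabla\cdot\vsigma_D(\widehat\vv-\widehat\vu)\|^2+\delta_s\|\nabla\cdot\vsigma_D(\widetilde\vv-\widetilde\vu)\|^2+\delta_b\|\vv_S-\vu_S\|^2_{\partial\Omega},
\]
which, comparing with \eqref{eq:bilinear} and noting that the $S$-component of $\vv_D-\vu_D$ is $\vv_S-\vu_S$, equals $a_D(\vv_D-\vu_D,\vv_D-\vu_D)$. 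Applying \eqref{lem:coercive} to $\vv_D-\vu_D$ then yields the master inequality $\|\vv_S-\vu\|^2_{1/2}\lesssim a_D(\vv_D-\vu_D,\vv_D-\vu_D)=\calE_D(\vv_D)$, valid for every such $\vv_D$.

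Both assertions now follow by taking $\vv_D=\vu_{D,\tth^*}$. For \eqref{eq:estEd} I use directly the definition $\eta_D(\tth^*_D)=\calE_D(\vu_{D,\tth^*})-\calL_D(\tth^*_D)$, so $\|\vu_{S,\tth^*}-\vu\|^2_{1/2}\lesssim\calE_D(\vu_{D,\tth^*})=\calL_D(\tth^*_D)+\eta_D(\tth^*_D)$. For the C\'ea-type bound \eqref{cea lem} I split $\calE_D(\vu_{D,\tth^*})=\kappa(\vu_{D,\tth^*})+\inf_{\vu_{D,\tth}\in\calF_\Theta\times\calF_\Theta}\calE_D(\vu_{D,\tth})$; by the quadratic-form identity, $\calE_D(\vu_{D,\tth})=a_D(\vu_{D,\tth}-\vu_D,\vu_{D,\tth}-\vu_D)$, and the boundedness of $\nabla\cdot\vsigma_\lambda,\nabla\cdot\vsigma_\mu:\bfH^2(\Omega)\to\bfL^2(\Omega)$ (with constants controlled by $\lambda_{\max},\mu_{\max}$) together with the trace inequality give $a_D(\vw_D,\vw_D)\lesssim\|\vw_D\|^2_2$; hence $\inf_{\vu_{D,\tth}}\calE_D(\vu_{D,\tth})\lesssim\inf_{\vu_{D,\tth}}\|\vu_{D,\tth}-\vu_D\|^2_2$, closing the argument.

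The computations here are routine; the genuinely delicate point is conceptual. The coercivity chain \eqref{lem:coercive} controls only the sum $\vv_S$ and never the pieces $\widehat\vv,\widetilde\vv$ separately, which is precisely why the estimate can be phrased only for $\|\vu_{S,\tth^*}-\vu\|_{1/2}$; and since the decomposition $\vu=\widehat\vu+\widetilde\vu$ is highly non-unique, the best-approximation term in \eqref{cea lem} is tied to the particular reference pair $\vu_D$ fixed at the outset (one could additionally take an infimum over all admissible decompositions). A secondary issue to make precise is the correspondence between the discrete training loss $\calL_D$ and the continuous energy $\calE_D$ in \eqref{eq:Euvf}: the discrepancy coming from quadrature of the integrals by empirical sums, together with the learned forcing networks versus the fixed exact data $\widehat\vf_D,\widetilde\vf_D$, is what the error term $\eta_D(\tth^*_D)$ is meant to absorb.
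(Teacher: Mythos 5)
Your proposal is correct and follows essentially the same route as the paper: establish $\calE_D(\vu_D)=0$, identify $\calE_D(\vv_D)-\calE_D(\vu_D)$ with the quadratic form $a_D(\vv_D-\vu_D,\vv_D-\vu_D)$, invoke the coercivity chain \eqref{lem:coercive} for the first bound, and split $\calE_D(\vu_{D,\tth^*})$ into the optimization error plus the best-approximation term controlled via the continuity of $a_D$ in $\|\cdot\|_2$. Your write-up is in fact slightly more careful than the paper's (which has some index slips between $\vu_{D,\tth}$ and $\vu_{D,\tth^*}$ in its final display), and your remarks on the sum-only coercivity and the non-uniqueness of the reference decomposition correctly identify the genuinely delicate points.
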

\begin{proof}
Let $\widehat{\vu}$ and $\widetilde{\vu}$ solve the systems
\eqref{eq:dec_1} and \eqref{eq:dec_2}. 
Then $\calE_D(\vu_D) = 0$ and it follows from the definition
of the bilinear form \eqref{eq:bilinear} that
\begin{equation}\label{eq:energy1}
 \calE_D(\vu_{D,\tth^*})
- \calE_D(\vu_D)
=  a_D (  \vu_{D,\tth^*}-\vu_D,
\vu_{D,\tth^*}-\vu_D).
\end{equation}
Using the coercivity of the bilinear form $a_D(\cdot,\cdot)$, see 
\eqref {lem:coercive}, we obtain
\begin{equation}\label{eq:coercive}
\|\vu_{S,\tth^*} - \vu\|^2_{1/2}
\lesssim \calE_D (\vu_{D,\tth^*})
 = \calL_D(\tth_D^*) + \eta_D(\tth^*_D),
\end{equation}
 proving the estimate~\eqref{eq:estEd}. To prove the second estimate~\eqref{cea lem}, by using the definition of the bilinear form, we have, for any
$\vu_{D,\tth} \in [H^2(\Omega)]^{2d}$,
\[\calE_D(\vu_{D,\tth})-\calE_D(\vu_D)
= a_D ( \vu_{D,\tth^*}-\vu_D,\vu_{D,\tth^*}-\vu_D) 
\lesssim \|\vu_{D,\tth} -\vu_D\|^2_{2}.\]
Now using the above estimate
\begin{equation}\label{eq:now}
\begin{aligned}
\calE_D(\vu_{D,\tth})-\calE_D(\vu_D)
&= \calE_D(\vu_{D,\tth^*})   
- \inf_{\theta_1,\theta_2} \calE_D(\vu_{D,\tth})
+ \inf_{\theta_1,\theta_2} [\calE_D(\vu_{D,\tth})
-\calE_D(\vu_D)] \\
& \le
\kappa(\vu_{D,\tth^*}) + C \|\vu_{D,\tth} - \vu_D\|^2_{2}.
\end{aligned}
\end{equation}
The second estimate now follows from \eqref{eq:energy1}, \eqref{eq:coercive} and \eqref{eq:now}.
\end{proof}




\section{Numerical Experiments}\label{sec: numerical section}

\begin{figure}
  \centering
  \begin{minipage}{\textwidth}
    \centering
    \begin{subfigure}{0.25\textwidth}
      \includegraphics[width=\linewidth]{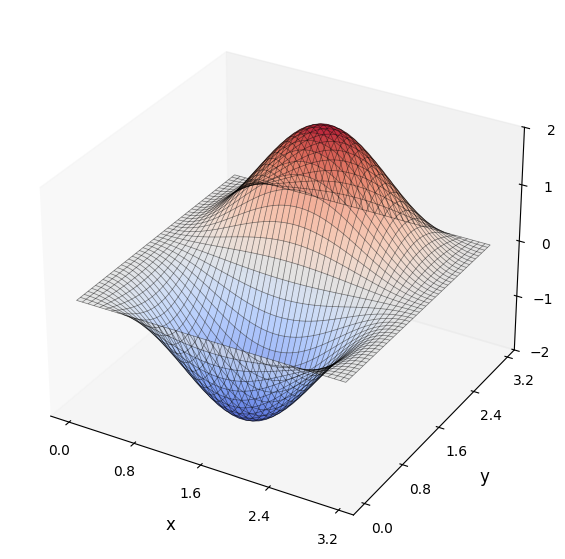}
      \caption*{$u_1$}
    \end{subfigure}
    \hspace{3mm}
    \begin{subfigure}{0.25\textwidth}
      \includegraphics[width=\linewidth]{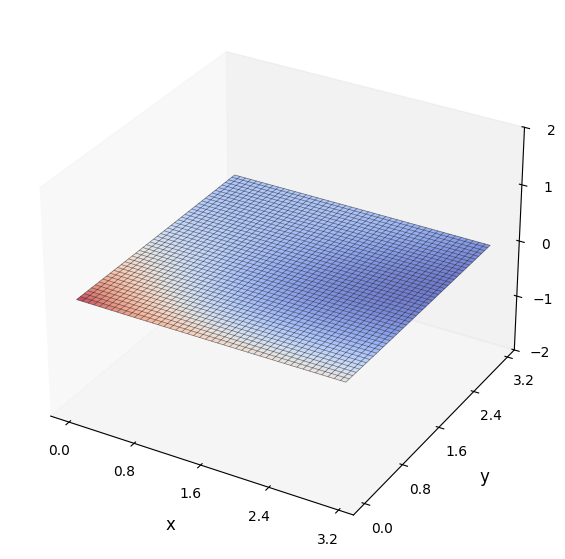}
      \caption*{$u_1$}
    \end{subfigure}
    \hspace{3mm}
    \begin{subfigure}{0.25\textwidth}
      \includegraphics[width=\linewidth]{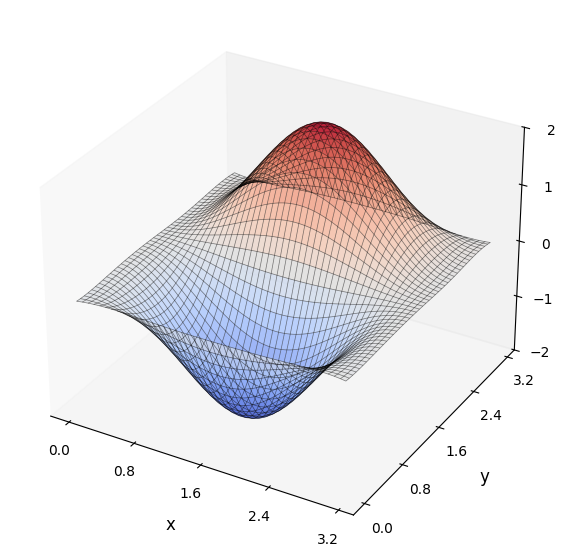}
      \caption*{$u_1$}
    \end{subfigure}
  \end{minipage}

  \begin{minipage}{\textwidth}
    \centering
    \begin{subfigure}{0.25\textwidth}
      \includegraphics[width=\linewidth]{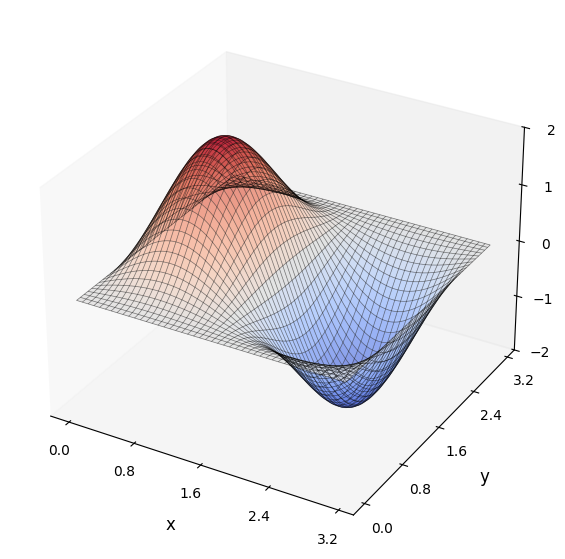}
      \caption*{$u_2$ \\ \vspace{2mm} (a) True solution}
    \end{subfigure}
    \hspace{3mm}
    \begin{subfigure}{0.25\textwidth}
      \includegraphics[width=\linewidth]{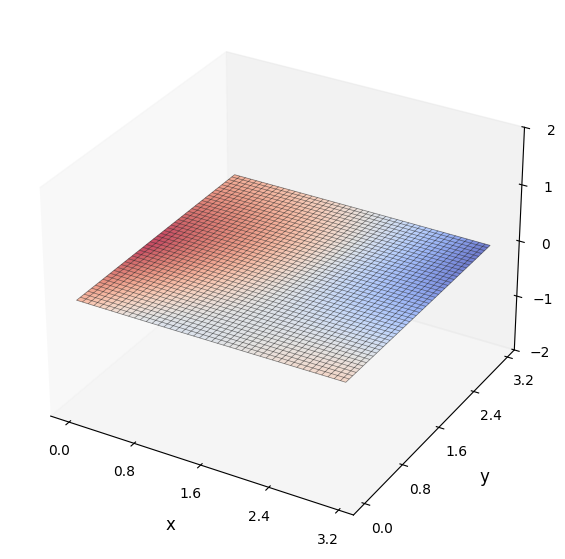}
      \caption*{$u_2$ \\  \vspace{2mm} (b) Standard PINN}
    \end{subfigure}
    \hspace{3mm}
    \begin{subfigure}{0.25\textwidth}
      \includegraphics[width=\linewidth]{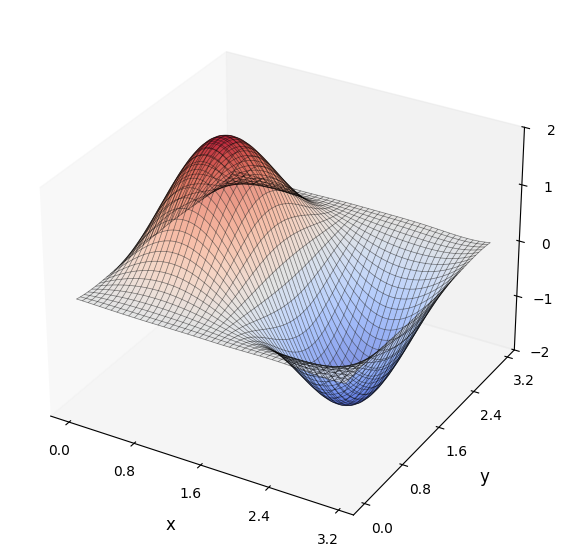}
      \caption*{$u_2$ \\  \vspace{2mm} (c) Our method}
    \end{subfigure}
  \end{minipage}
  \caption{A comparison of solutions in two dimensions when $\lambda=10^5$: the exact solution (a), the solution computed by the standard PINN (b), and the solution obtained by our method (c). The experiment shows that standard PINNs fail to approximate the true solution in this example.}
  \label{ex1_sol_pred}
\end{figure}

\begin{figure}
    \centering
    \begin{subfigure}{0.30\textwidth}
        \centering
        \includegraphics[width=\linewidth]{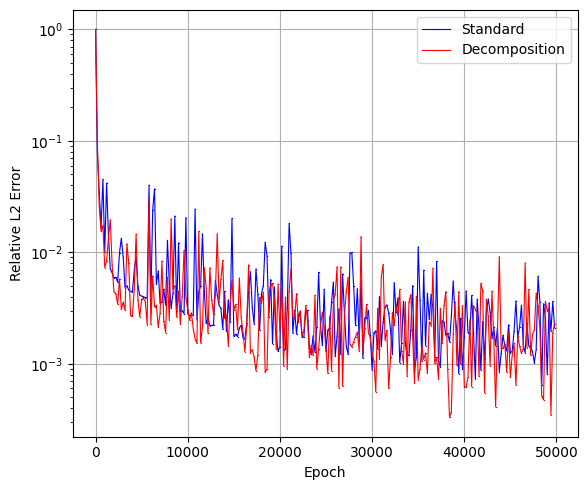}
        \caption{$\lambda=10$}
    \end{subfigure}   
    \begin{subfigure}{0.30\textwidth}
        \centering
        \includegraphics[width=\linewidth]{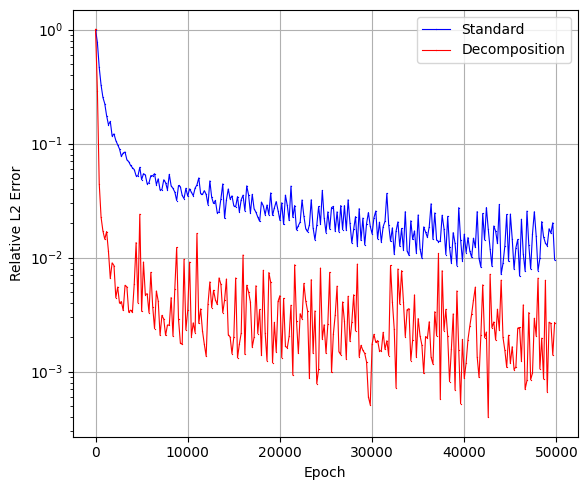}
        \caption{$\lambda=10^2$}
    \end{subfigure}    
    \begin{subfigure}{0.30\textwidth}
        \centering
        \includegraphics[width=\linewidth]{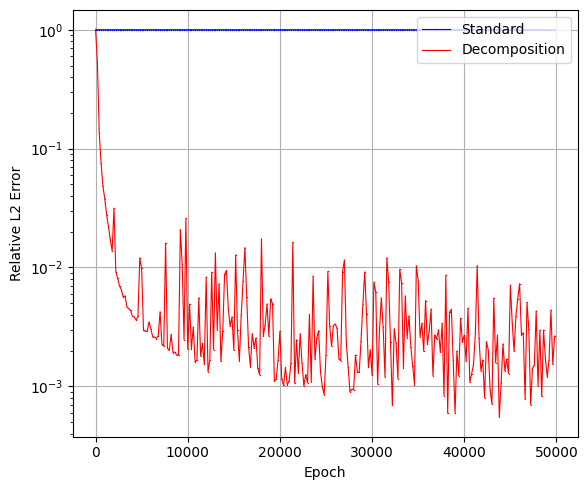} 
        \caption{$\lambda=10^5$}
    \end{subfigure}
    \caption{A comparison of learning curves with respect to the relative $L^2$-error between the standard PINN and our method for various values of $\lambda$ in two dimensions. The experiment shows that for $\lambda=10^5$, the standard PINN approach does not yield an approximation to the true solution in this example.}
    \label{ex1_learn_curve}
\end{figure}

In this section, we conduct a series of numerical experiments to assess the effectiveness of the proposed method. We compare a model trained with the standard PINN against a model trained using the proposed decomposition-based approach. To provide a comprehensive evaluation of the proposed method when the Lam\'e coefficients $\mu$ and $\lambda$ in the elasticity equations are constant,  including scenarios with large  $\lambda$ (nearly incompressible material), we perform some numerical experiments in both two and three dimensions. We then investigate the case of variable $\lambda$ and $\mu$ (inhomogeneous material). Finally, we will also conduct experiments on a parametric PINN, which enables the rapid prediction of solutions for varying Lam\'e coefficients without the need for retraining. To ensure a fair comparison, all other settings, including the model architecture, initialization, and optimization algorithm, remain unchanged.

\subsection{Example 1  (Two-dimensional case)}\label{SectionEx1}
We set $\Omega=(0,\pi)^2$ and $\mu=1,$  and choose the body force $\vf$ so that the true solution of \eqref{eq:EE} is
\begin{equation}\label{ex1}
\vu(\boldsymbol{x})=\begin{bmatrix}  u_1(x,y)\\  u_2(x,y)\end{bmatrix}
=\begin{bmatrix}
\bigl(\cos(2x)-1\bigr)\sin(2y)\\ \bigl(1-\cos(2y)\bigr)\sin(2x)
\end{bmatrix}
+\frac{\sin(x)\sin(y)}{\lambda}\begin{bmatrix}1\\ 1\end{bmatrix},    
\end{equation}
where $\bsx=(x,y) \in \Omega$. For a baseline model, we implement a neural network consisting of four hidden layers, each containing 50 neurons, activated by the GELU function. For training, as described before, we adopt the decomposed loss function \eqref{main_loss} with weights $\delta_r= 0.05$, $\delta_s= 1$ and $\delta_b= 20$, and number of sampling points $N_r=5000$, $N_s=5000$ and $N_b=400$. We use the Adam optimizer as an optimization algorithm.

Figure \ref{ex1_sol_pred} illustrates the solutions predicted by a standard PINN and our proposed method when $\lambda=10^5$ (nearly incompressible material). As shown in the figure, the solution obtained using the original PINN is close to zero with $1.01$e$0$ relative $L^2$-error, failing to accurately predict the solution of the given problem. In contrast, the solution computed using our proposed method closely approximates the exact solution with $1.65$e$-3$ relative $L^2$-error. To compare the learning curves for different values of $\lambda$, we conducted training using both the standard PINN and our proposed method for various values of $\lambda$, computing the relative $L^2$-error at each training epoch. The results of this experiment are presented in Figure \ref{ex1_learn_curve}. As observed from Figure \ref{ex1_learn_curve} (a), when $\lambda=10$ (no locking is anticipated), there is no significant difference in training efficiency between the original PINN and our proposed method. However, as $\lambda$ is getting large,  it is clear from  (b) and (c) of Figures \ref{ex1_learn_curve} that our method demonstrates superior training efficiency and accuracy compared to the standard PINN. It is noteworthy that, as we can see from Figure \ref{ex1_learn_curve} (c) (when $\lambda=10^5$, the material becomes nearly incompressible), the standard PINN fails to train due to the non-robustness phenomenon in the nearly incompressible case, whereas our proposed method continues to achieve effective training with high accuracy. 

\begin{figure}
  \centering
  \begin{minipage}{\textwidth}
    \centering
    \begin{subfigure}{0.25\textwidth}
      \includegraphics[width=\linewidth]{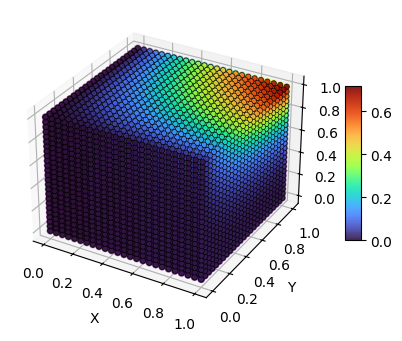}
      \caption*{$u_1$}
    \end{subfigure}
    \hspace{3mm}
    \begin{subfigure}{0.25\textwidth}
      \includegraphics[width=\linewidth]{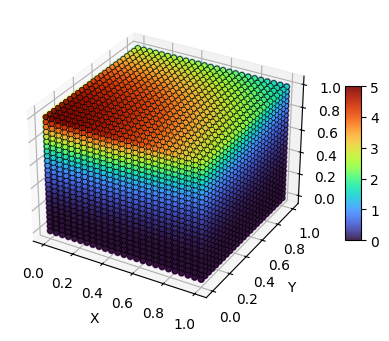}
      \caption*{$u_2$}
    \end{subfigure}
    \hspace{3mm}
    \begin{subfigure}{0.25\textwidth}
      \includegraphics[width=\linewidth]{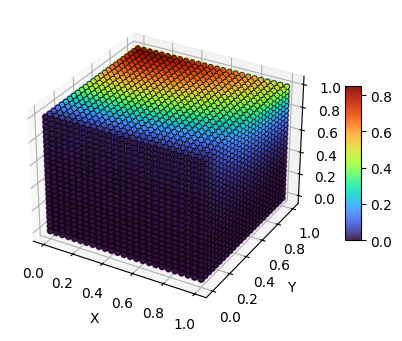}
      \caption*{$u_3$}
    \end{subfigure}
    \caption*{(a) True solution}
  \end{minipage}

  \begin{minipage}{\textwidth}
    \centering
    \begin{subfigure}{0.25\textwidth}
      \includegraphics[width=\linewidth]{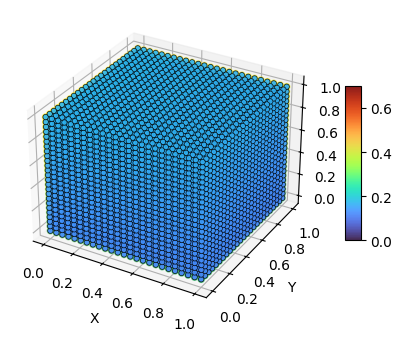}
      \caption*{$u_1$}
    \end{subfigure}
    \hspace{3mm}
    \begin{subfigure}{0.25\textwidth}
      \includegraphics[width=\linewidth]{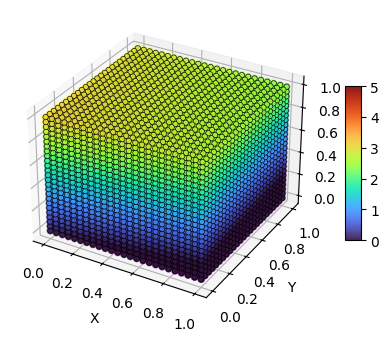}
      \caption*{$u_2$}
    \end{subfigure}
    \hspace{3mm}
    \begin{subfigure}{0.25\textwidth}
      \includegraphics[width=\linewidth]{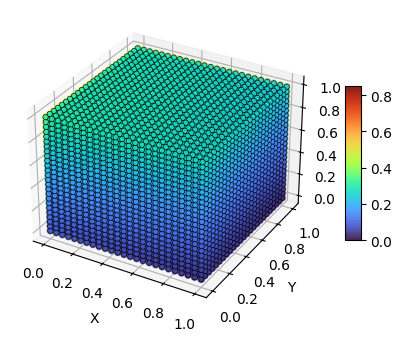}
      \caption*{$u_3$}
    \end{subfigure}
    \caption*{(b) Standard PINN}
  \end{minipage}

  \begin{minipage}{\textwidth}
    \centering
    \begin{subfigure}{0.25\textwidth}
      \includegraphics[width=\linewidth]{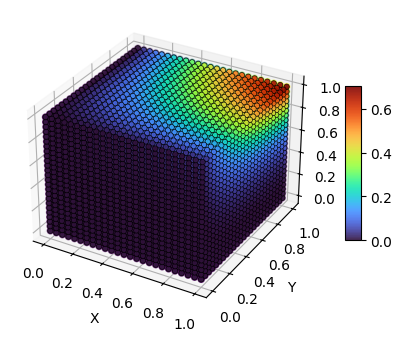}
      \caption*{$u_1$}
    \end{subfigure}
    \hspace{3mm}
    \begin{subfigure}{0.25\textwidth}
      \includegraphics[width=\linewidth]{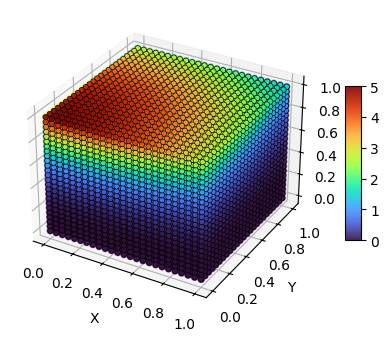}
      \caption*{$u_2$}
    \end{subfigure}
    \hspace{3mm}
    \begin{subfigure}{0.25\textwidth}
      \includegraphics[width=\linewidth]{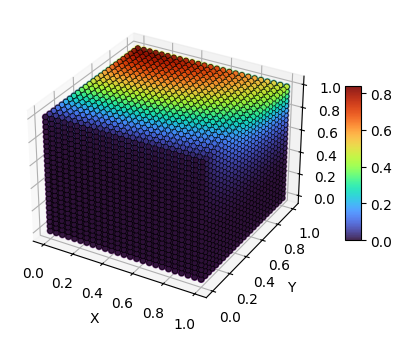}
      \caption*{$u_3$}
    \end{subfigure}
    \caption*{(c) Our method}
  \end{minipage}

  \caption{A comparison of solutions in three dimensions when $\lambda=10^4$: the exact solution (a), the solution computed by the standard PINN (b), and the solution obtained by our method (c). The experiment shows that standard PINNs fail to approximate the true solution in this example.}
  \label{ex2_sol_pred}
\end{figure}

\begin{figure}
    \centering
    \begin{subfigure}{0.30\textwidth}
        \centering
        \includegraphics[width=\linewidth]{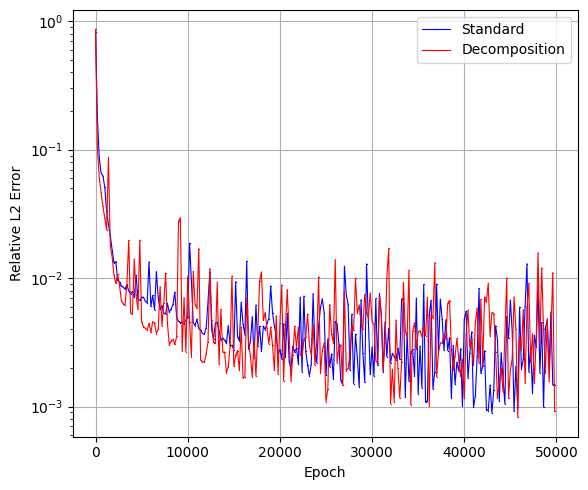}
        \caption{$\lambda=10$}
    \end{subfigure}   
    \begin{subfigure}{0.30\textwidth}
        \centering
        \includegraphics[width=\linewidth]{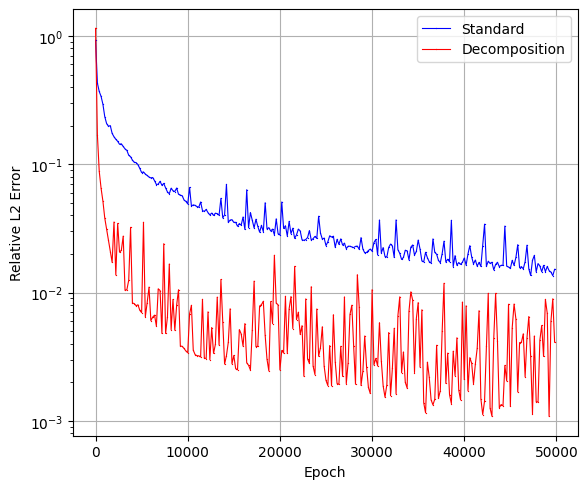}
        \caption{$\lambda=10^2$}
    \end{subfigure}    
    \begin{subfigure}{0.30\textwidth}
        \centering
        \includegraphics[width=\linewidth]{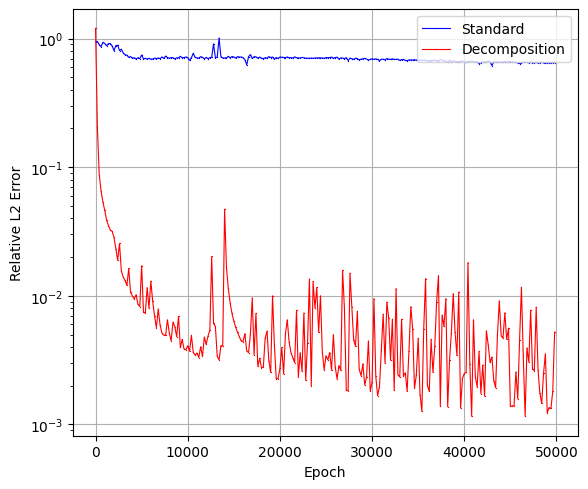} 
        \caption{$\lambda=10^4$}
    \end{subfigure}
    \caption{A comparison of learning curves with respect to the relative $L^2$-error between the standard PINN and our method for various values of $\lambda$ in three dimensions. The experiment shows that for $\lambda = 10^4$, the standard PINN approach does not yield an approximation to the true solution in this example.}
    \label{ex2_learn_curve}
\end{figure}

\subsection{Example 2 (Three-dimensional case)}
This example is devoted to illustrating that our method also performs well for a three-dimensional case. We set $\Omega=(0,1)^3$ and  $\mu=1$ in \eqref{eq:EE}-\eqref{eq:sig}. We choose  $\vf$  so that 
\[\vu(\boldsymbol{x})=\begin{bmatrix}  u_1(x,y,z)\\  u_2(x,y,z)\\u_3(x,y,z)\end{bmatrix}
=\begin{bmatrix} z^3\sin x\sin y\\ 5z^3\cos x \cos y\\z^4\cos x \sin y\end{bmatrix} +\frac{1}
{\lambda}\begin{bmatrix}\sin x\\ \sin y \\ \sin z \end{bmatrix},\] 
where $\boldsymbol{x}=(x,y,z) \in \Omega$. As before, we utilize a neural network with four hidden layers. Each layer consists of 128 neurons, and the GELU function is used as the activation function. To compute the solution, we aim to minimize the proposed loss function \eqref{main_loss} with coefficients  $\delta_r= 0.05$, $\delta_s= 1$, and $\delta_b= 300$, and numbers of random samples $N_r=N_s=15625$ and $N_b=1536$, and once again we adopt the Adam optimizer.

As in the two-dimensional case,  Figure \ref{ex2_sol_pred} presents a comparison between the solutions predicted by the standard PINN and those obtained using our proposed method when $\lambda=10^4$. As depicted in the figure,  the standard PINN produces a solution that is nearly zero with a $6.95$e$-1$ relative $L^2$-error, indicating its failure to accurately approximate the solution of the given problem. On the other hand, our proposed method yields solutions that closely match the exact solution, achieving the $9.85$e$-3$ relative $L^2$ error. Figure \ref{ex2_learn_curve} shows the comparison of learning curves for both the standard PINN and our method for different values of $\lambda$. As before, the standard PINN fails to accurately compute the solution as $\lambda$ increases, whereas our method consistently generates accurate numerical solutions. These experiments indicate that, particularly in the nearly incompressible case, the decomposition-based approach significantly improves the solution accuracy compared to the standard PINN, and the proposed method is robust against the value of $\lambda$ also in a three-dimensional domain.

\begin{figure}
  \centering
  \begin{minipage}{\textwidth}
    \centering
    \begin{subfigure}{0.25\textwidth}
      \includegraphics[width=\linewidth]{u1_sol.png}
      \caption*{$u_1$}
    \end{subfigure}
    \hspace{3mm}
    \begin{subfigure}{0.25\textwidth}
      \includegraphics[width=\linewidth]{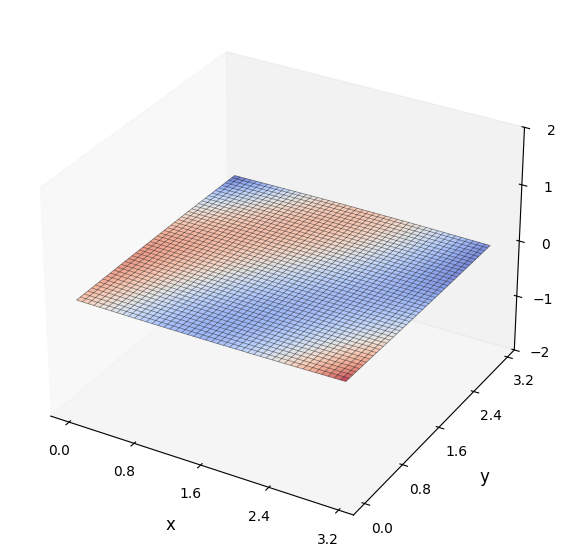}
      \caption*{$u_1$}
    \end{subfigure}
    \hspace{3mm}
    \begin{subfigure}{0.25\textwidth}
      \includegraphics[width=\linewidth]{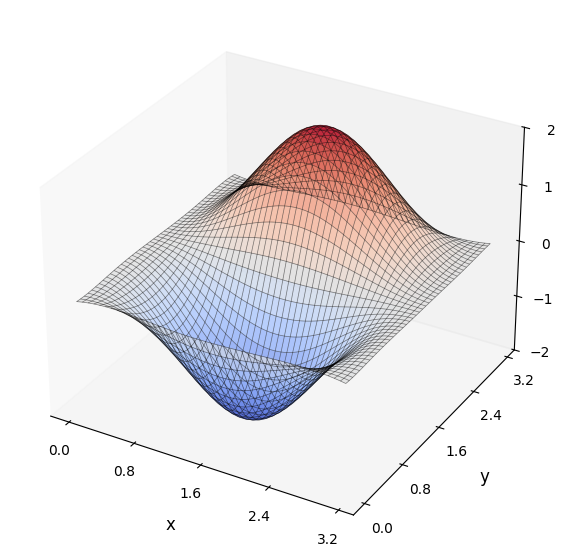}
      \caption*{$u_1$}
    \end{subfigure}
  \end{minipage}

  \begin{minipage}{\textwidth}
    \centering
    \begin{subfigure}{0.25\textwidth}
      \includegraphics[width=\linewidth]{u2_sol.png}
      \caption*{$u_2$ \\ \vspace{2mm} (a) True solution}
    \end{subfigure}
    \hspace{3mm}
    \begin{subfigure}{0.25\textwidth}
      \includegraphics[width=\linewidth]{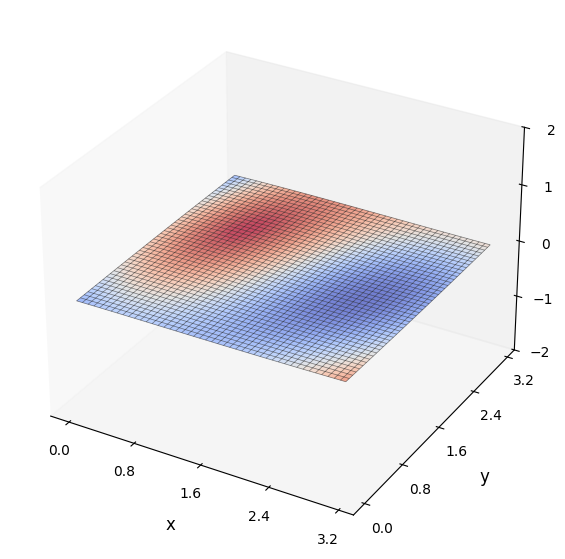}
      \caption*{$u_2$ \\  \vspace{2mm} (b) Standard PINN}
    \end{subfigure}
    \hspace{3mm}
    \begin{subfigure}{0.25\textwidth}
      \includegraphics[width=\linewidth]{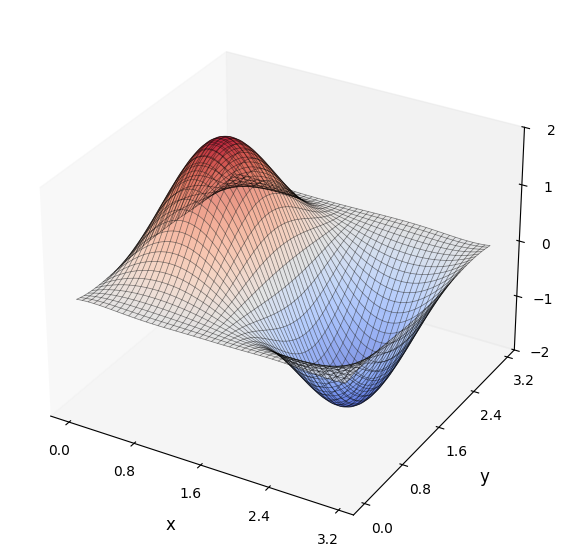}
      \caption*{$u_2$ \\  \vspace{2mm} (c) Our method}
    \end{subfigure}
  \end{minipage}
  \caption{A comparison of solutions for the case of variable Lam\'e coefficients with $\Lambda=10^4$: the exact solution (a), the solution computed by the standard PINN (b), and the solution obtained by our method (c). The experiment shows that standard PINNs fail to approximate the true solution in this example.}
  \label{ex4_sol_pred}
\end{figure}

\begin{figure}
    \centering
    \begin{subfigure}{0.30\textwidth}
        \centering
        \includegraphics[width=\linewidth]{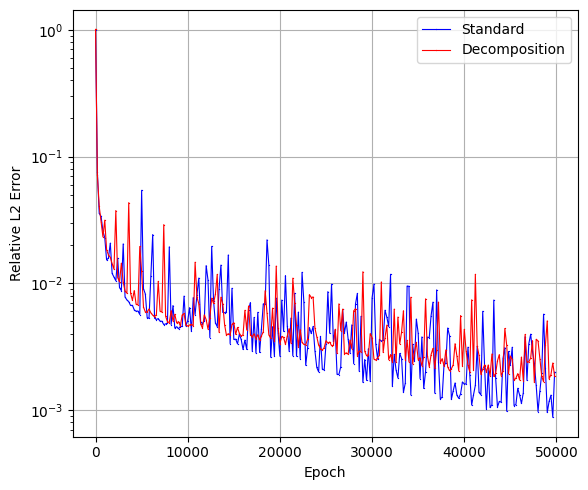}
        \caption{$\Lambda=10$}
    \end{subfigure}   
    \begin{subfigure}{0.30\textwidth}
        \centering
        \includegraphics[width=\linewidth]{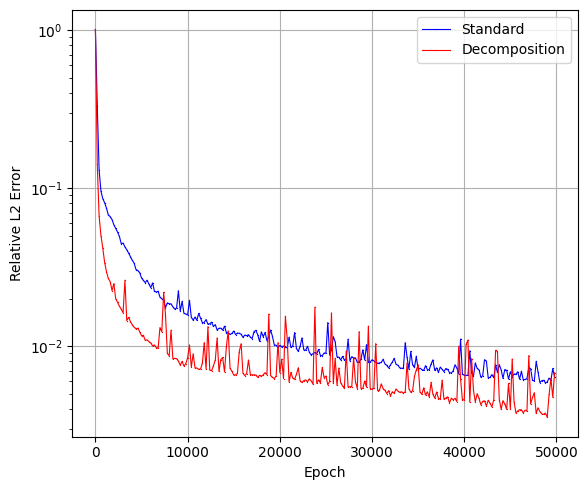}
        \caption{$\Lambda=10^2$}
    \end{subfigure}    
    \begin{subfigure}{0.30\textwidth}
        \centering
        \includegraphics[width=\linewidth]{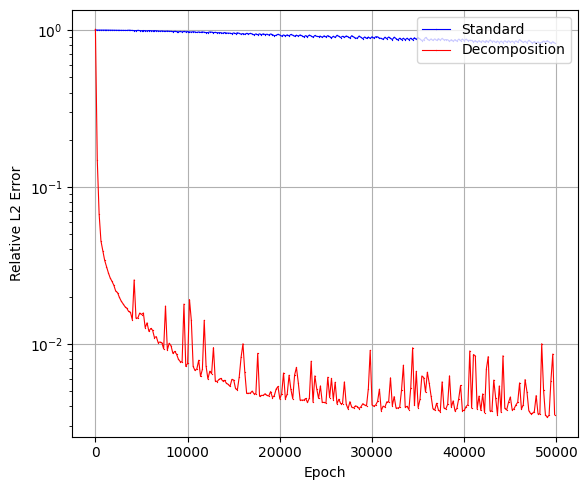} 
        \caption{$\Lambda=10^4$}
    \end{subfigure}
    \caption{A comparison of learning curves with respect to the relative $L^2$-error between the standard PINN and our method for the case of variable Lam\'e coefficients with various values of $\Lambda$. The experiment shows that for $\lambda = 10^4$, the standard PINN approach does not yield an approximation to the true solution in this example.}
    \label{ex4_learn_curve}
\end{figure}

\subsection{Example 3 (Inhomogeneous material)}
In this example, we consider the elasticity equation \eqref{eq:EE} with variable  Lam\'e coefficients $\mu$ and $\lambda$. For $\bsx=(x,y)\in (0,\pi)^2$,  we set $\lambda(\boldsymbol{x}) = \Lambda(1+\tfrac12\sin(2x))$  and $\mu(\boldsymbol{x}) = 1 + x+y$. Here, \(\Lambda\) is a positive scaling parameter that tends to infinity in the nearly incompressible limit, reflecting the material’s increasing resistance to volumetric change. We choose  the body force $\vf$  so that 
\[\vu(\boldsymbol{x})=\begin{bmatrix}  u_1(x,y)\\  u_2(x,y)\end{bmatrix}
=\begin{bmatrix} \bigl(\cos(2x)-1\bigr)\sin(2y)\\ \bigl(1-\cos(2y)\bigr)\sin(2x)
\end{bmatrix} +\frac{\sin(x)\sin(y)}{\Lambda}\begin{bmatrix}1\\ 1\end{bmatrix}.\] 

For this experiment, we consider a neural network architecture composed of four hidden layers, each consisting of 64 neurons, with GELU activation functions. The training procedure employs the decomposed loss function \eqref{main_loss}, with weighting parameters set as $\delta_r= 0.05$, $\delta_s= 1$ and $\delta_b= 20$, along with sampling sizes $N_r = 5000$, $N_s = 5000$, and $N_b = 400$, respectively. The optimization procedure is carried out using the Adam optimizer.

Figure \ref{ex4_sol_pred} compares the predicted solutions obtained by the standard PINN and the proposed method for $\Lambda = 10^4$. It can be clearly observed from the figure that the standard PINN produces solutions very close to zero, yielding a relative $L^2$-error of $9.95$e$-1$, thus failing to predict the solution accurately. In contrast, our proposed method provides a significantly improved prediction closely matching the exact solution, with a relative $L^2$-error reduced to $8.64$e$-3$. To further evaluate the effectiveness of the proposed approach across different material parameters, we compared learning curves for varying values of $\Lambda$, computing the relative $L^2$-error at each epoch. These comparisons are summarized in Figure \ref{ex4_learn_curve}. As illustrated in Figure \ref{ex4_learn_curve} (a), when $\Lambda = 10$, indicating negligible locking effects, the training performances of the standard PINN and our proposed approach are comparable. However, as $\Lambda$ increases, as shown in Figures \ref{ex4_learn_curve} (b) and (c), the proposed method exhibits distinctly superior performance in terms of accuracy. Notably, Figure \ref{ex4_learn_curve} (c) (when $\Lambda=10^4$, the inhomogeneous material becomes nearly incompressible) highlights that the standard PINN fails to converge, whereas the proposed approach maintains effective training and generates consistently accurate predictions.

\subsection{Example 4 (Parametric problem)}

As discussed in Section \ref{sec:para_PINN}, one of the principal advantages of neural network-based approaches lies in their ability to efficiently approximate solutions for parametric PDE problems. Specifically, the parametric PINN framework extends the original PINN formulation by incorporating an integration of the loss function over the parameter space. 

In the practical application of elasticity, both the Young's modulus $E$ and the Poisson's ratio $\nu$ are typically obtained from experimental measurements of the material being modeled. Owing to this, it is natural to 
express  $\lambda$ and $\mu$ in terms of $E$ and  $\nu$ via the following relation: 
\[
\lambda=\frac{E\nu}{(1+\nu)(1-2\nu)}\quad\text{and}\quad
\mu= \frac{E}{2(1+\nu)},
\]
and train a model providing rapid solution predictions corresponding to varying values of $\mu$ and $\lambda$. In other words, the aim in this example is to look for a parametric solution $\vu$ as a function of  $E$ and  $\nu$. In this formulation, the parameters $\boldsymbol{p}=(E, \nu)$ are sampled from the parametric domain $\mathcal{P}=(2,4) \times (0.1,0.5)$, which encompasses, in particular, the near-incompressibility regime.

For the experiment, we employ the same setting as used in Example 1, where we consider the linear elasticity equation \eqref{eq:EE} with the true solution given in \eqref{ex1}.  
Utilizing the notation established in Section \ref{sec:para_PINN}, we choose $\delta_r=1$ and $\delta_b=20$ in \eqref{parametric_loss1}, that is, the parametric loss function $\mathcal{L}_{\rm para} = \mathcal{L}_{{\rm{para}},\,r} + 20\mathcal{L}_{{\rm{para}},\,b},$ where  $\mathcal{L}_{{\rm{para}},\,r}$ and $\mathcal{L}_{{\rm{para}},\,b}$ are defined in \eqref{parametric_loss}.  The neural network architecture consists of four hidden layers with 64 neurons per layer and employs GELU as an activation function. For the training procedure, we used the Adam optimizer and generated sampling points from a uniform distribution, with $N_{pb} \times N_b = 30{,}000$ and $N_{pr} \times N_r = 100{,}000$ used for the boundary and residual components, respectively.

As previously noted, a primary advantage of this approach is its capability to generate real-time predictions of the solution in response to variations in the parameters $E$ and $\nu$. This represents a significant improvement over the standard PINN approaches, which necessitate retraining the model whenever the parameters are varied. Figure \ref{para_error_surf} presents surface plots of the relative $L^2$-error for both the standard PINN and our proposed method, as the pair of parameters $\boldsymbol{p}=(E,\nu)$ varies within the specified range $\mathcal{P}=(2,4)\times(0.1,0.5)$. As observed in Figure \ref{para_error_surf}, our proposed method yields accurate solutions across the entire parameter domain. In particular, while the standard PINN exhibits a sharp increase in error as $\nu$ approaches $\frac{1}{2}$, corresponding to the nearly incompressible regime, our method maintains stable and accurate predictions even in this region, known to cause significant computational challenges. This demonstrates the robustness of our approach with respect to variations in the material parameters.

\begin{figure}
    \centering
    \begin{subfigure}{0.4\textwidth}
        \centering
        \includegraphics[width=\linewidth]{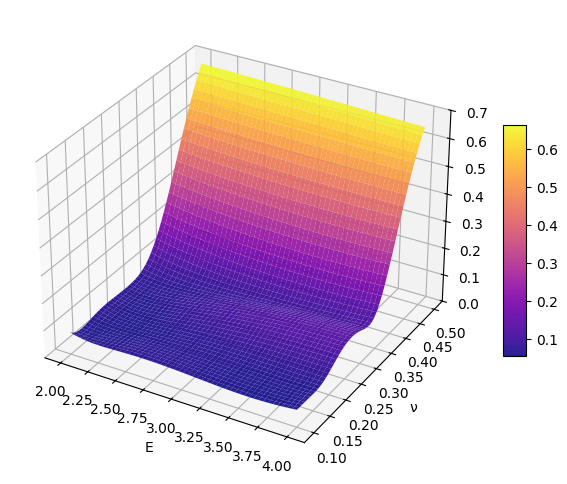}
        \caption{Standard PINN}
    \end{subfigure}   
    \begin{subfigure}{0.4\textwidth}
        \centering
        \includegraphics[width=\linewidth]{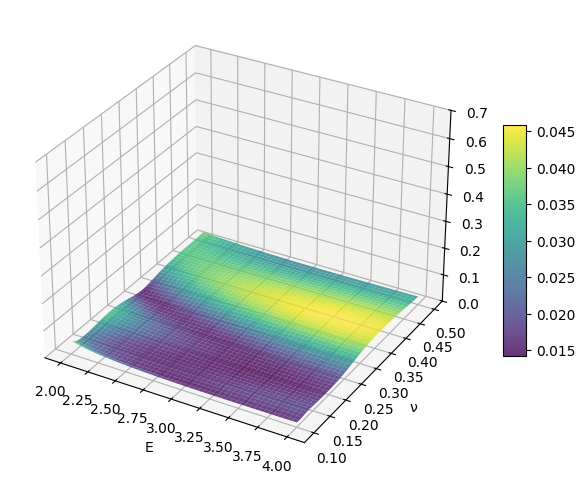}
        \caption{Our method}
    \end{subfigure}   
    \caption{Error surfaces for the relative $L^2$-errors in the parametric solution $\vu(\cdot;E,\nu)$ across varying values of $(E,\nu)$. The experiment shows that for standard PINN, there is a sharp increase in the error as the material becomes nearly incompressible (i.e., $\nu$ approaches $1/2$).}
    \label{para_error_surf}
\end{figure}

To further demonstrate the effectiveness of the proposed method in mitigating the locking phenomenon, we evaluated the error with respect to varying values of $\nu$ while keeping $E$ fixed. Specifically, we divided the range of $\nu$ into seven intervals: $(0.1,0.2)$, $(0.2,0.3)$, $(0.3,0.4)$, $(0.4,0.45)$, $(0.45,0.49)$, $(0.49,0.499)$ and $(0.499,0.4999)$. For each interval, 100 random values of $\nu$ were sampled, and the averaged relative $L^2$-error was measured. As shown in the Figure \ref{Parametric_overall}, our method consistently produced accurate solutions across all intervals. In particular, even when the $\nu$ is close to $0.5,$ where locking typically occurs, the proposed method produces accurate and stable solution predictions.

\begin{figure}
    \centering    \includegraphics[width=0.5\textwidth]{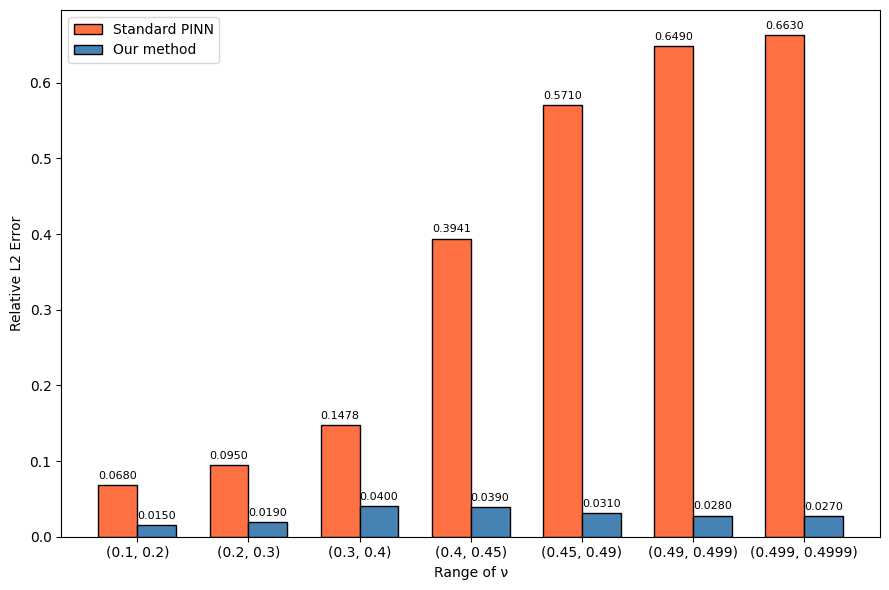}
    \caption{Comparison of the average relative $L^2$-errors in the parametric solution $\vu(\cdot;E,\nu)$ of the standard PINN and our method over $100$ samples in each interval with different range of $\nu$. The experiment shows that a standard PINN approach is sensitive to the value of $\nu$, especially in the nearly incompressible regime (i.e., $\nu$ approaches $1/2$).}
    \label{Parametric_overall}
\end{figure}

\section{Conclusion}\label{sec_conclusion}
In this paper, we investigate the linear elasticity equations within the nearly incompressible regime and propose a novel and robust approach utilizing PINNs. Unlike the conforming FEM, where locking (non-robustness) phenomena occur primarily due to approximation limitations, locking in PINNs arises predominantly from a pronounced imbalance between the Lam\'e coefficients, thereby causing significant difficulties in optimization. To resolve this issue, we introduce a decomposition of the governing equations into two carefully structured subsystems, separating the effect of each parameter. Our proposed approach concurrently addresses forward and inverse problems during the training, automatically identifying external forces and boundary conditions suitable for the considered scenarios. The efficiency and robustness of our methodology are validated through comprehensive numerical experiments, including both two- and three-dimensional examples, problems involving variable coefficients, and parametric settings. The results consistently demonstrate the capability of our method to effectively handle elastic materials that become nearly incompressible, highlighting its potential applicability to complex elasticity problems in computational mechanics.

An application of our method to more practical problems in real-world scenarios would form an interesting future research direction. Moreover, the decomposition-based approach we propose is expected to be applicable to a broader class of problems in which the quality of numerical approximation is highly sensitive to parameter values, such as singular perturbation problems involving small diffusion parameters, or the Navier--Stokes equations with large Reynolds numbers. These possibilities will be explored in future works.

\section*{Acknowledgments}
Seungchan Ko and Sanghyeon Park are supported by National Research Foundation of Korea Grant funded by the Korean Government (RS-2023-00212227). Josef Dick, Quoc Thong Le Gia, and Kassem Mustapha are partially supported by the ARC Grant DP220101811.


\begin{thebibliography}{99.}
\bibitem{AlmeidaSilvaJr2023} E. F. de Almeida, S. da Silva, and A. Cunha Jr, Physics-informed neural networks for solving elasticity problems, Proceedings of the 27th International Congress on Mechanical Engineering (COBEM 2023) (2023).

\bibitem{AinsworthParker2022} M. Ainsworth and C. Parker, Unlocking the secrets of locking, Comput. Methods Appl. Mech. Engrg., 395, 115034 (2022).

\bibitem{ArnoldAwanouWinther2014} D. N. Arnold, G. Awanou and  R. Winther, Nonconforming tetrahedral mixed finite elements for elasticity, Math. Models Methods Appl. Sci., 24, 783--796 (2014).

\bibitem{BabuskaSuri1992}  I. Babuška and M. Suri, Locking effects in the finite element approximation of elasticity problems, Numer. Math., 62, 439--463 (1992).


\bibitem{auto_diff_1} A. G. Baydin, B. A. Pearlmutter, A. A. Radul and J. M. Siskind, Automatic Differentiation in Machine Learning: a Survey, J. Mach. Learn. Res., 18, 5595--5637 (2017).


\bibitem{BlechschmidtErnst2021} J. Blechschmidt and O. G. Ernst, Three ways to solve partial differential equations with neural networks – A review,  GAMM-Mitteilungen, 44, e202100,006 (2021). 

\bibitem{BramwellDemkowiczGopalakrishnanQiu2012} J. Bramwell, L. Demkowicz, J. Gopalakrishnan and  W. Qiu, A locking-free hp DPG method for linear elasticity with symmetric stresses, Numer. Math., 122, 671--707 (2012). 

\bibitem{BrennerSung1992} S. C. Brenner and  L.-Y. Sung, Linear finite element methods for planar linear elasticity, Math. Comput., 59, 321--338 (1992).

\bibitem{CaiMaoWang2021} S. Cai, Z. Mao, Z. Wang, M. Yin, and G. E. Karniadakis, Physics-informed neural networks (PINNs) for fluid mechanics: a review, Acta. Mech. Sin., 37, 1727--1738 (2021). 

\bibitem{ChenGu2023} C.-T. Chen and G. X. Gu, Physics-informed deep-learning for elasticity: forward, inverse, and mixed problems, Adv. Sci., 10, 2300439 (2023).

\bibitem{ChenXie2016} G. Chen and  X. Xie, A robust weak Galerkin finite element method for linear elasticity with strong symmetric stresses, Comput. Methods Appl. Math., 16, 389--408 (2016).


\bibitem{Cuomoetal2022} S. Cuomo, V. Schiano di Cola, F. Giampaolo, G. Rozza, M. Raissi, and F. Picciali, Scientific machine learning through physics–informed neural networks: Where we are and what's next, Journal of Scientific Computing, 92, 88, (2022).
 
\bibitem{VeigaBrezziMarini2013} L. B. Da Veiga, F. Brezzi, and  L. D. Marini, Virtual elements for linear elasticity problems, SIAM J. Numer. Anal., 51, 794--812 (2013).



\bibitem{DiPietroNicaise2013} D. A.  Di Pietro and S. Nicaise, A locking-free discontinuous Galerkin method for linear elasticity in locally nearly incompressible heterogeneous media, Appl. Numer. Math., 63, 105--116 (2013).

\bibitem{EdoardoStefanoCarloLuca2020} A. Edoardo, M. Stefano, L. Carlo and P. Luca,  A dual hybrid virtual element method for plane elasticity problems, ESAIM: Math. Model. Numer. Anal., 54, 1725--1750 (2020).

\bibitem{EskinDavydovGurevaMalkhanovSmorkalov2024} V. A. Es'kin, D. V. Davydov, J. V. Gur'eva, A. O. Malkhanov, and M. E. Smorkalov, Separable physics-informed neural networks for the solution of elasticity problems (2024), arxiv.org/abs/2401.13486.

\bibitem{Falk1991} R. Falk, Nonconforming finite element Methods for the equations of linear elasticity, Math. Comput., 57, 529--550 (1991).

\bibitem{Trudinger} 
D. Gilbarg, and N. S. Trudinger, 
Elliptic partial differential equations of second order, Springer-Verlag, Berlin (2001).

\bibitem{GopalakrishnanGuzman2011} J. Gopalakrishnan and J. Guzm\'an, Symmetric nonconforming mixed finite elements for linear elasticity, SIAM J. Numer. Anal. 49, 1504--1520 (2011).

\bibitem{GuoHaghighat2022} M. Guo and E. Haghighat, Energy-based error bound of physics-informed neural network solutions in elasticity,  J. Engineering Mechanics, 148, 04022038 (2022).

\bibitem{HaghighatRaissiMoure2021} E. Haghighat, M. Raissi, A. Moure, H. Gomez, and R. Juanes, A physics-informed deep learning framework for inversion and surrogate modeling in solid mechanics, Comput. Methods Appl. Mech. Engrg., 379, 113, 741 (2021b).

\bibitem{HansboLarson2002} P. Hansbo and  M. G. Larson, Discontinuous Galerkin methods for incompressible and nearly incompressible elasticity by Nitsche’s method, Comput. Methods Appl. Mech. Eng., 191, 1895--1908 (2002).

\bibitem{HuoWangWangZhang2020}  F. Huo, R. Wang, Y. Wang and R. Zhang, A locking-free weak Galerkin finite element method for linear elasticity problems,  Computers Math. Appl., 160, 181--190 (2024).

\bibitem{pPINN} A. Kaltenbach and M. Zeinhofer, The deep Ritz method for parametric $p$-dirichlet
problems (2022), arxiv.org/pdf/2207.01894v1.

\bibitem{KarniadakisKevrekidisLu2021} G. E. Karniadakis, Y. Kevrekidis, and L. Lu, P. Perdikaris, Physics-informed machine learning, Nature Reviews Phys., 3, 422--440 (2021).

 \bibitem{KharazmiZhangKarniadakis2021} E. Kharazmi, Z. Zhang, and G. E. M. Karniadakis, hp-VPINNs: Variational physics-informed neural networks with domain decomposition, Comput. Methods Appl. Mech. Engrg., 374, 113, 547 (2021b).


\bibitem{vs_pinn} S. Ko and S. Park, VS-PINN: A fast and efficient training of physics-informed neural networks using variable-scaling methods for solving PDEs with stiff behavior,   J. Comput. Phys.,  529, 113860 (2025).

\bibitem{LeeLeeSheen2003} C.-O. Lee, J. Lee and  D. Sheen, A locking-free nonconforming finite element method for planar linear elasticity, Adv. Comput. Math., 19, 277--291 (2003).

\bibitem{LiuWang2022} Y. Liu and  J. Wang, A locking-free $P_0$ finite element method for linear elasticity equations on polytopal partitions, IMA J. Numer. Anal., 42, 3464--3498 (2022).

\bibitem{MaoChen2008} S. Mao and  S. Chen, A quadrilateral nonconforming finite element for linear elasticity problem, Adv. Comput. Math., 28, 81--100 (2008).

\bibitem{Mustaphaetal2024} K. Mustapha, W. McLean, J. Dick, and Q. T. Le Gia,  A simple modification to mitigate locking in conforming FEM for nearly incompressible elasticity (2024) https://arxiv.org/html/2407.06831v3.

\bibitem{auto_diff_2} A. Paszke, S. Gross, S. Chintala, G. Chanan, E. Yang, Z. DeVito, Z. Lin, A. Desmaison, L. Antiga and A. Lerer,
Automatic differentiation in PyTorch (2017).

\bibitem{RaissiPerdikarisKarniadakis2017c} M. Raissi, P. Perdikaris, and G. E.  Karniadakis, Physics Informed Deep Learning (Part I): Data-driven solutions of nonlinear partial differential equations  (2017c) arXiv:1711.10561. 


\bibitem{RaissiPerdikarisKarniadakis2017d} M. Raissi, P. Perdikaris, and G. E.  Karniadakis, Physics Informed Deep Learning (Part II): Data-driven discovery of nonlinear partial differential equations (2017d) arXiv:1711.10566.


\bibitem{RaissiPerdikarisKarniadakis2019} M. Raissi, P. Perdikaris, and G. E.  Karniadakis, Physics-informed neural networks: A deep learning framework for solving forward and inverse problems involving nonlinear partial differential equations, J. Comput. Phys., 378, 686--707 (2019). 


\bibitem{RoyBoseSundararaghavanArroyave2023} A. M. Roy, R. Bose, V. Sundararaghavan and R. Arr\'oyave, Deep learning-accelerated computational framework based on physics-informed neural network for the solution of linear elasticity, Neural Networks, 162, 472--489 (2023).


\bibitem{ScottVogelius1985} L. R. Scott and  M. Vogelius, Norm estimates for a maximal right inverse of the divergence operator in spaces of piecewise polynomials, RAIRO Math. Model. Num. Anal., 19, 111--143 (1985).

\bibitem{SoonCockburnStolarski2009} S.-C. Soon, B. Cockburn and H. K. Stolarski, A hybridizable discontinuous Galerkin method for linear elasticity, Int. J. Numer. Meth.  Eng., 80, 1058--1092 (2009).

\bibitem{SunGaoPan2020a} L. Sun, H. Gao, S. Pan, and J.-X. Wang, Surrogate modeling for fluid flows based on physics-constrained deep learning without simulation data, Comput. Methods Appl. Mech. Engrg., 361, 112, 732 (2020a).  


\bibitem{ZhangZhaoYangChen2019} B. Zhang, J. Zhao, Y. Yang, and  S. Chen, The nonconforming virtual element method for elasticity problems, J. Comput. Phys., 378, 394--410 (2019).

\bibitem{ZhuZabarasKoutsourelakis2019} Y. Zhu, N. Zabaras, P.-S. Koutsourelakis, and P. Perdikaris, Physics-constrained deep learning for high-dimensional surrogate modeling and uncertainty quantification without labeled data,   J. Comput. Phys.,  394, 56--81 (2019).

\bibitem{ZeinhoferMasriMardal2024} 
M.~Zeinhofer, R.~Masri, K.-A. Mardal,
A Unified Framework for the Error Analysis of Physics-Informed Neural Networks, IMA J. Num. Anal. (2024).

\end{thebibliography}
\end{document}